\newtheorem{theorem}{Theorem}[section]
\newtheorem{lemma}[theorem]{Lemma}
\newtheorem{proposition}[theorem]{Proposition}
\theoremstyle{definition}
\newtheorem{definition}[theorem]{Definition}
\theoremstyle{remark}
\newtheorem{remark}[theorem]{Remark}
\numberwithin{equation}{section}
\def\Xint#1{\mathchoice
   {\XXint\displaystyle\textstyle{#1}}%
   {\XXint\textstyle\scriptstyle{#1}}%
   {\XXint\scriptstyle\scriptscriptstyle{#1}}%
   {\XXint\scriptscriptstyle\scriptscriptstyle{#1}}%
   \!\int}
\def\XXint#1#2#3{{\setbox0=\hbox{$#1{#2#3}{\int}$}
     \vcenter{\hbox{$#2#3$}}\kern-.5\wd0}}
\def\dashint{\Xint-}
\newcommand{\p}{\Delta_p}
\newcommand{\ps}{(-\Delta_p)^s}
\newcommand{\la} {\lambda}
\newcommand{\ba}{\beta}
\newcommand{\Om} {\Omega}
\newcommand{\pa}{\partial}
\newcommand{\sr}{\mathbb{R}}
\newcommand{\si}{\sigma}
\newcommand{\rar}{\rightarrow}
\newcommand{\na} {\nabla}
\newcommand{\wop}{W^{1,p}_0 (\Omega)}
\newcommand{\test}{C_c^{\infty} (\Om)}
\newcommand{\omc}{(\overline{\Om})}
\newcommand{\bdry}{\mathbb{R}^N \setminus \Omega}
\newcommand{\ftil}{\tilde{f}}
\newcommand{\De} {\Delta}
\newcommand{\sn}{\mathbb{N}}
\newcommand{\et}{\tilde{e}_0}
\begin{document}

\title[Multiplicity results for mixed local-nonlocal operators]{Multiple positive solutions to a perturbed Gelfand problem involving mixed local-nonlocal operators and singular nonlinearity}

\author{Sarbani Pramanik}
\email{sarbanipramanik20@iisertvm.ac.in}

\address{School of Mathematics, Indian Institute of Science Education and Research Thiruvananthapuram, Maruthamala, Thiruvananthapuram, Kerala, 695551, India.}

\subjclass{{Primary: 35R11, 35J75, 35B51, 35A16, 35J92, 35B09}}

\keywords{Mixed local nonlocal operator, singular nonlinearity, perturbed Gelfand problem, multiplicity of positive solution, three positive solutions, strong comparison principle}

\begin{abstract}

We investigate a perturbed Gelfand problem involving a mixed local-nonlocal $p$-Laplacian operator with singular nonlinearity:
\begin{equation*}
\begin{aligned}
    -\Delta_p u + (-\Delta_p)^s u = \lambda \frac{f(u)}{u^{\beta}}\ \text{in} \ \Omega\\
    u >0\ \text{in} \ \Omega,\ u =0\ \text{in} \ \mathbb{R}^N \setminus \Omega
\end{aligned}
\end{equation*}
where $\Omega \subset \mathbb{R}^N$ is a smooth bounded domain, $\la > 0 $ is a parameter, $0\leq \beta<1$ and $f$ is a non-decreasing $C^1$-function with $f(0)>0$. Using the method of sub- and supersolutions, we present a novel multiplicity result and, in specific cases, we also prove a three-solution theorem using Amann's fixed point theorem. Our construction of sub-supersolutions avoids the conventional reliance on ODE techniques and Green's function estimates, thereby making it more adaptable to the nonlinear and nonlocal framework. Additionally, we establish a Hopf-type Strong Comparison Principle for the linear operator with singular nonlinearity, marking the first result of its kind for mixed local-nonlocal operators. This result is crucial in deriving a third solution and holds broader mathematical significance.

\end{abstract}

\maketitle


\section{Introduction}

Mixed local-nonlocal partial differential equations have attracted significant attention due to their relevance in modeling complex phenomena that exhibit both short-range and long-range interactions. These include, for example, anomalous diffusion processes, flame propagation, population dynamics, studying materials with heterogeneous structures, and even financial modeling. In such contexts, the classical $p$-Laplacian operator captures local diffusion effects, while the fractional $p$-Laplacian accounts for nonlocal interactions arising over spatially extended regions. The coexistence of these two operators in a single equation introduces rich mathematical structure, but also substantial analytical challenges -- especially when the nonlinearity involved is singular.

In this work, we investigate a boundary value problem involving a mixed local-nonlocal operator and a singular nonlinearity. Specifically, we consider the problem: 
\begin{equation}\tag{$P_{\la}$}\label{P_lambda}
    \begin{aligned}
        -\p u + \ps u = \la \frac{f(u)}{u^{\ba}} \text{ in } \Om\\
        u >0 \text{ in } \Om,\ u =0 \text{ in } \bdry
    \end{aligned}
\end{equation}
where $1<p<\infty$, $s\in(0,1)$, $0\leq \ba<1$, $\la>0$ is a parameter, the function $f:[0,\infty) \rar (0,\infty)$ is the nonlinearity and $\Om$ is a smooth bounded domain in $\sr^N$. The operator is a superposition of the local operator $p$-Laplacian, defined by $\p u:= div \left(|\na u|^{p-2} \na u\right)$, with the nonlocal fractional $p$-Laplacian, which is given by $$\ps u(x):= 2 P.V. \int_{\sr^N} \frac{\left|u(x) - u(y)\right|^{p-2} \left(u(x) - u(y)\right)}{|x-y|^{N+ps}}\ dy.$$ 
Our objective is to establish the existence and multiplicity of positive solutions to this problem \eqref{P_lambda}, with an emphasis on identifying parameter regimes where three distinct positive solutions can be guaranteed.

The interest in equation \eqref{P_lambda} is further motivated by its connection to combustion models in chemical kinetics. A prototypical example of the nonlinearity $f$ in this context is the function $f(t) = e^{\frac{\alpha t}{\alpha +t}}$, where $\alpha>0$ is large and denotes the activation energy parameter. In the non-singular case (i.e., $\beta =0)$, the corresponding local problem for the Laplacian becomes
\begin{equation}\label{perturbed_Gelfand}
    \begin{aligned}
        -\De u &= \la e^{\frac{\alpha u}{\alpha + u}} \text{ in } \Om\\
        u &=0 \text{ on } \pa\Om
    \end{aligned}
\end{equation}
which is regarded as a perturbed version of the classical Gelfand problem $-\De u = \la e^u$. Equation \eqref{perturbed_Gelfand} arises in the modeling of solid fuel combustion processes and specifically corresponds to the small fuel loss steady-state model \cite{bebernes2013mathematical}. In this setting, $u$ represents the temperature distribution within the medium while the exponential term describes the temperature-dependence governed by the Arrhenius reaction-rate law in irreversible chemical kinetics. The parameter $\la>0$ is the Frank-Kamenetskii parameter.

The problem described in equation \eqref{perturbed_Gelfand}, along with its various generalizations, has been the focus of a long-standing and rich line of research at the intersection of mathematical analysis and physical modeling -- see, for instance, \cite{bratu1914equations}, \cite{parks1961criticality}, \cite{gelfand1963}, \cite{joseph1972}, \cite{parter1974}, \cite{sattinger1975nonlinear}, \cite{tam1979}, \cite{dancer1980}, \cite{brown1981s}, \cite{du2001conjecture}, \cite{ramaswamy2004multiple}, \cite{hung2011jde}, \cite{huang2015bifurcation}, \cite{huang2016one}, and the references therein. Despite this substantial progress made over the years, the study of Gelfand-type problems continues to evolve, reflecting its enduring theoretical significance and practical importance. In more recent years, such problems have been extended in several directions, including the replacement of the Laplacian with nonlinear and nonlocal operators and the incorporation of singular terms. We refer to \cite{wang2008p}, \cite{rosoton2014fractional}, \cite{charro2023infinity}, \cite{huang2025minkowski}, \cite{molino2016singular} and \cite{molino2025random} among others, for relevant developments in these directions. Motivated by this dual perspective, the present work considers a perturbed Gelfand-type equation driven by a mixed local-nonlocal operator, and involves a possibly singular nonlinearity of the form $\frac{f(u)}{u^{\beta}}$ with $0\leq \beta<1$. This combination gives rise to new analytical challenges to establish the existence and multiplicity of positive solutions to \eqref{P_lambda}.

From a purely mathematical standpoint, the study of multiplicity results for nonlinear elliptic equations with singularities has a rich history, particularly in the case of local operators. Initial studies by \cite{brown1981s} and \cite{ramaswamy2004multiple} established the existence of three positive solutions for equations of the following type: 
 \[-\p u = \la f(u) \text{ in } \Omega ,\ u = 0 \text{ on } \pa\Om,\]
where $f$ is a non-decreasing $C^1$-function on $[0,\infty)$ that is $p$-sublinear at infinity and satisfies $f(0)>0$. In subsequent works, such as \cite{dhanya2015three} and \cite{ko2011multiplicity}, these results were extended to similar problems involving singular nonlinearities, that is, replacing the right-hand side of the above equation with $\la \frac{f(u)}{u^{\ba}}$ for some $\ba\in(0,1)$. More recently, multiplicity results for non-homogeneous operators (\cite{acharya2021existence}, \cite{arora2021multiplicity}) and Schr\"odinger operators with $L^{\infty}$-potentials \cite{ko2024multiplicity} have also been established. 

In contrast, the literature on multiplicity results for nonlocal or mixed local-nonlocal operators remains less developed. Existing results for mixed operators include \cite{garain2022mixed}, \cite{arora2023combined}, \cite{garain2023class}, \cite{biroud2023mixed}, \cite{su2024multiple} and \cite{biagi2024multiplicity} among others, where they establish the existence of one or two solutions via variational methods or monotone iteration techniques. However, results that establish the existence of three distinct solutions in the mixed setting are largely absent. In this work, we aim to build on the classical three-solution framework introduced in the seminal contributions \cite{brown1981s}, \cite{ramaswamy2004multiple}, and explore its extension to the setting of mixed local-nonlocal linear operators. A related contribution in the nonlocal case is given in \cite{giacomoni2019existence}, where the authors established the existence of three positive solutions for singular problems involving the fractional Laplacian. However, their approach relies significantly on the linearity of the operator and suitable estimates on the associated Green's function, limiting its adaptability to nonlinear operators.

In this article, we study the boundary value problem $(P_\lambda)$ under a set of structural conditions on the nonlinearity $f$, aimed at ensuring the existence of multiple positive solutions. The function $f:[0,\infty)\rightarrow (0,\infty)$ is assumed to satisfy the following conditions: 
\begin{itemize}
    \item[$(f1)$]  $f \in C^1 \left([0,\infty)\right)$ with $f(0)>0$,

    \item[$(f2)$] $f$ is non-decreasing in $\sr^+$,

    \item[$(f3)$] $\lim_{t\rar\infty} \frac{f(t)}{t^{p-1+\ba}} =0 $,

    \item[$(f4)$] there exist constants $\si_1, \si_2$ such that $0< \si_1 < b\si_2 <\si_2$ and $\frac{f(t)}{t^{\ba}}$ is non-decreasing on $(\si_1 , \si_2)$, where $0<b=b(p,s,N,\Om)<1$ is given in \eqref{b_B},

    \item[$(f5)$] there exists a constant $a\in (\sigma_1,b\sigma_2)$ such that
    \[Q(\si_1 , a):= \frac{\si_1^{p-1+\ba}}{f(\si_1)} \frac{f(a)}{a^{p-1+\ba}} >B,\]
    where $B=B(\ba,p,s,N,\Om)>0$ is a fixed constant explicitly defined in \eqref{b_B}.
\end{itemize}

While the assumptions (f4) and (f5), and consequently the parameter regime in which the multiplicity result holds (see Theorem \ref{theorem_1} and the associated expressions for $\la_*$ and $\la^*$ given in remark \ref{region_multiplicity}) may depend on the geometry of the domain $\Om$, this dependence can be relaxed in certain special cases. For instance, in the case of the model nonlinearity $f(t) = e^{\frac{\alpha t}{\alpha +t}}$ -- as observed in the work of Brown-Ibrahim-Shivaji (\cite{brown1981s}, p. 485) for Laplacian -- even in our setting the multiplicity of positive solutions persists for arbitrary domains $\Om$, provided the parameter $\alpha$ is sufficiently large. This observation also holds a clear physical relevance. Indeed, taking large values of $\alpha$ corresponds to the regime of large activation energy asymptotics, where the reaction rate becomes sharply sensitive to the temperature -- a key feature in thermal ignition and combustion theory.

The novelty of our work lies in providing, to the best of our knowledge, the first multiplicity and three-solution results for mixed local-nonlocal problems with singular nonlinearity. A key component of our approach is the construction of suitable sub- and supersolutions using auxiliary functions tailored to the mixed setting. Unlike traditional methods that rely on ODE techniques -- commonly applied to the $p$-Laplacian (see, for example, \cite{ramaswamy2004multiple}, \cite{ko2011multiplicity}) or $(p,q)$-Laplacian (see \cite{acharya2021existence}, \cite{arora2021multiplicity}) -- and estimates on Green's function, often used in the setting of fractional Laplacian (see \cite{giacomoni2019existence}), our approach follows a different path. This alternate framework offers greater flexibility in carrying out the analysis for mixed nonlinear operators.

Another significant aspect of this work is the development of a Hopf-type Strong Comparison Principle for linear mixed operators with singular nonlinearity (Theorem \ref{Hopf lemma_linear case}). This result is a crucial tool in deriving a third positive solution when two positive solutions are already known. Notably, no such result has been established in the literature for singular problems with mixed operators. In this paper, in the case of $p=2$, we prove a Hopf-type Strong Comparison result (Theorem \ref{Hopf lemma_linear case}), which not only supports our primary objective but also stands as a significant contribution in its own right. Additionally, our result applies for every $s\in (0,1)$ and thus improves the Strong Comparison Principle (Theorem 2.8) of \cite{dhanya2024interiorboundaryregularitymixed}.

Now, we summarize the main findings of this work in the following theorems.

\begin{theorem}\label{theorem_1}
    Let $f$ satisfy $(f1)$-$(f3)$. Then
    \begin{enumerate}
        \item[(a)] the problem \eqref{P_lambda} admits a positive solution for every $\la>0$,

        \item[(b)] if, additionally, $f$ satisfies $(f4)$-$(f5)$, then there exist $0<\la_* < \la^*$ such that for every $\la\in (\la_* , \la^*)$, the problem \eqref{P_lambda} admits two positive solutions.
    \end{enumerate}
\end{theorem}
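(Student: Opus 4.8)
The plan is to carry out the whole argument via the method of sub- and supersolutions, exploiting the homogeneity of the operator $-\p + \ps$ (degree $p-1$) to build an ordered pair of sub/supersolutions by scaling a single fixed profile. First I would set up the functional-analytic framework: fix the torsion-type function $\et$ solving $-\p \et + \ps \et = 1$ in $\Om$ with $\et = 0$ in $\bdry$ (its existence, positivity, and boundary behaviour $\et \sim \mathrm{dist}(\cdot,\pa\Om)$ being standard for the mixed operator), and recall the weak comparison principle and the existence/uniqueness theorem for the auxiliary problem $-\p w + \ps w = h$ with $h \in L^\infty$. For part (a), given $\la > 0$, I would produce a global subsolution by taking $\uphi = \epsilon \et$ for $\epsilon$ small: since $f(0) > 0$ and $0 \le \ba < 1$, the term $\la f(\epsilon\et)/(\epsilon\et)^\ba$ blows up slower than any negative power forces and dominates $-\p \uphi + \ps \uphi = \epsilon^{p-1}$ near the boundary once $\epsilon$ is small; the $p$-sublinearity (f3) gives a supersolution of the form $\overline{\phi} = M \et$ (or $M\et^{?}$) for $M$ large, because $\la f(M\et)/(M\et)^\ba = o(M^{p-1})$. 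Then a monotone iteration / Perron argument between $\uphi$ and $\overline\phi$ produces a solution; here the singularity $u^{-\ba}$ must be handled by the usual truncation-and-pass-to-the-limit device, and one checks the solution is a genuine weak solution in the appropriate space $\tp$ with $u \ge c\,\et$.

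For part (b) the strategy is to manufacture, for $\la$ in a suitable window, two distinct ordered pairs of sub/supersolutions whose "solution intervals" are disjoint, so that the minimal solutions they produce are distinct. Conditions (f4)–(f5) are tailored for exactly this: on the interval $(\si_1,\si_2)$ the map $t \mapsto f(t)/t^\ba$ is monotone, so the rescaled profile $v_c := c\,\et$ behaves well, and the quantity $Q(\si_1,a) > B$ is precisely the quantitative gap that lets one choose $\la$ so that $c_1 \et$ is a subsolution sitting \emph{below} the level $\si_1$ while $c_2 \et$ (with a larger constant calibrated through $a$ and the constant $b$ controlling $\min_{\omega}\et / \|\et\|_\infty$ on an interior subdomain) is a \emph{strict} supersolution that the first solution cannot exceed, and simultaneously a second, larger sub/supersolution pair straddling the range $(a, b\si_2)$ exists. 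Concretely: (i) pick $\la_*$ as the infimum of $\la$ for which the "lower" construction works and $\la^*$ the supremum for which the "upper" construction works, and verify $\la_* < \la^*$ using (f5); (ii) for $\la \in (\la_*,\la^*)$, obtain the first solution $u_1$ in $[\epsilon\et, c_1\et]$ and the second solution $u_2$ in $[c_2\et, M\et]$ by monotone iteration; (iii) show $u_1 \ne u_2$ because, by the strict supersolution property and the weak comparison principle, $u_1 \le c_1\et < c_2 \et \le u_2$ pointwise in the region where the constants are ordered, forcing $u_1 \not\equiv u_2$.

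The main obstacle I anticipate is step (ii)–(iii) of part (b): verifying that the scaled profiles $c\,\et$ are honest sub-/supersolutions of the \emph{mixed} operator against the \emph{singular} right-hand side. The nonlocal term $\ps(c\et)$ is not simply $c^{p-1}\ps\et$-plus-local-junk in a way that interacts cleanly with the tail contributions from $\bdry$, and near $\pa\Om$ one must control $\la f(c\et)/(c\et)^\ba$ against $c^{p-1}$ uniformly — this is where the explicit constants $b, B$ in \eqref{b_B} come from, and getting the inequalities to point the right way requires the precise boundary asymptotics of $\et$ together with (f1)–(f2). A secondary technical point is making the singular term meaningful in the weak formulation (test functions vanishing on $\pa\Om$, integrability of $f(u)u^{-\ba}\varphi$), handled by the a priori lower bound $u \ge c\,\et$ and $\ba < 1$. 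Once the sub/supersolution pairs are in place, the separation of the two solutions and hence multiplicity follows from the weak comparison principle and the strict ordering $c_1 < c_2$ built into the choice of $\la$.
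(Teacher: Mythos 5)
Your overall architecture — two ordered pairs of sub-/supersolutions with disjoint order intervals, monotone iteration in each, comparison principle for separation — is the same as the paper's, and your part (a) is essentially correct in outline (the paper additionally reformulates the equation as $-\p u + \ps u - \la f(0)u^{-\ba} = \ftil(u)$ with $\ftil(t)=\la(f(t)-f(0))t^{-\ba}$ continuous and, after adding $\tilde k t$, increasing; without some such device the iteration map is not monotone, since $t\mapsto f(t)/t^{\ba}$ is decreasing near $0$ and no linear correction tames the singular derivative). But your construction of the second pair by pure scaling of the torsion function $e$ has two genuine gaps. First, in the singular case $0<\ba<1$ no multiple $ce$ can ever be a supersolution: $-\p(ce)+\ps(ce)=c^{p-1}$ is bounded, while $\la f(ce)/(ce)^{\ba}\to\infty$ as $x\to\pa\Om$ because $f(0)>0$. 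This is precisely why the paper's supersolutions are multiples of $\psi_0$ solving $-\p\psi_0+\ps\psi_0=2\psi_0^{-\ba}$, whose left-hand side blows up at the same rate $d(x)^{-\ba}$ as the singular right-hand side. Your small supersolution $c_1 e$ (needed at level $\si_1$) therefore does not exist when $\ba>0$.

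Second, even when $\ba=0$, the large subsolution $c_2 e$ with the constant source $c_2^{p-1}$ spread over all of $\Om$ forces the inequality $c_2^{p-1}\le \la f(c_2 e(x))$ at \emph{every} point, hence $c_2^{p-1}\le\la\inf_{0<t\le c_2\|e\|_\infty}f(t)=\la f(0)$, while you simultaneously need $c_2 e\ge a$ on an interior set. The resulting lower threshold for $\la$ involves $a^{p-1}/f(0)$, whereas the upper threshold $\la_3$ from the small supersolution involves $\si_1^{p-1+\ba}/f(\si_1)$; hypothesis $(f5)$, which compares $\si_1^{p-1+\ba}/f(\si_1)$ with $a^{p-1+\ba}/f(a)$, does \emph{not} guarantee the resulting window is non-empty (for the model nonlinearity $f(t)=e^{\alpha t/(\alpha+t)}$ one has $f(0)=1$ versus $f(a)\approx e^{\alpha}$). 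The paper avoids this by taking the second subsolution's source to be $\tfrac{\la}{2}h(a)\chi_{B_R}$, localized on an interior ball and built from a cut-off $h\le f(t)/t^{\ba}$ that is non-decreasing on $(0,\si_2)$: outside $B_R$ the subsolution inequality is trivial, and inside it reduces to $h(a)\le h(\uphi)$, so the lower threshold $\la_1$ carries the factor $h(a)=f(a)/a^{\ba}$ and $(f5)$ exactly yields $\la_1<\la_3$. Relatedly, your claimed global ordering $c_1 e< c_2 e$ between the second supersolution and second subsolution is stronger than what is needed or achievable with the paper's ingredients; the paper only arranges $\phi_2\not\le\psi_2$ (namely $\phi_2\ge a$ on $B_R$ while $\|\psi_2\|_\infty=\si_1<a$), which already makes the two order intervals disjoint.
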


This result can be further improved for two special cases: the non-singular scenario (i.e., $\ba=0$ and $p>1$) and the linear singular case (i.e., $p=2$ and $0<\ba <1$). In both settings, we establish the existence of a third solution that is distinct from the two obtained in Theorem \ref{theorem_1}. This indicates that the bifurcation diagram possesses at least two turning points, giving rise to an $S$-like structure. In case of an exact $S$-shaped bifurcation curve, the two critical values associated with the turning points correspond to the ignition limit and the extinction limit, respectively. The upper branch of the curve is commonly referred to as the explosion branch, while the lower branch is called the quenching branch. For details, we refer to \cite{kapila1979}.

\begin{theorem}\label{theorem_2}
    Suppose $p>1$, $\ba=0$ and $f$ satisfies $(f1)$-$(f5)$. Then for every $\la\in (\la_* , \la^*)$, the problem \eqref{P_lambda} admits three positive solutions.
\end{theorem}

As noted earlier, the Hopf-type Strong Comparison Principle serves as a key ingredient in obtaining the third solution in both singular and non-singular cases. Unlike the non-singular case, however, one must first establish this principle for singular problems in the linear setting before proceeding to prove the existence of a third solution for \eqref{P_lambda}.

\begin{theorem}[\textbf{Strong Comparison Principle}]\label{Hopf lemma_linear case}
    Let $u_1, u_2 \in C^{1,\alpha}(\overline{\Om})$, for some $\alpha\in(0,1)$, $u_1, u_2 > 0$ in $\Om$ satisfy 
    \begin{equation}\label{scp_equns}
        \begin{aligned}
            -\Delta u_1 + (-\De)^s u_1 - \frac{1}{u_1^{\ba}} = f_1 \text{ in } \Om\\
            -\Delta u_2 + (-\De)^s u_2 - \frac{1}{u_2^{\ba}} = f_2 \text{ in } \Om\\
            u_1 =u_2 =0 \text{ in } \bdry
        \end{aligned}
    \end{equation}
    where $f_1,f_2$ are continuous functions with $0 \leq f_1 \leq f_2$, $f_1 \not\equiv 0$ and $f_1 \not\equiv f_2$ in $\Om$. Then $u_1 < u_2$ in $\Om$ and $\frac{\pa u_2}{\pa\nu} < \frac{\pa u_1}{\pa\nu} < 0$ on $\pa\Om$.
\end{theorem}

\begin{theorem}\label{theorem_3}
    Suppose $p=2$, $0<\ba<1$ and $f$ satisfies $(f1)$-$(f5)$. Then for every $\la\in (\la_* , \la^*)$, the problem \eqref{P_lambda} admits three positive solutions.
\end{theorem}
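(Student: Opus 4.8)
The plan is to deduce Theorem~\ref{theorem_3} by running the same topological-degree machinery used for Theorem~\ref{theorem_2}, now powered by the Hopf-type strong comparison principle for the linear mixed operator (Theorem~\ref{Hopf lemma_linear case}), which is precisely the ingredient that the singular term $u^{-\ba}$ forces us to supply by hand. By Theorem~\ref{theorem_1}, for $\la\in(\la_*,\la^*)$ we already have two ordered positive solutions, say $u_1<u_2$, obtained as a minimal solution inside an interval $[\uphi,\overline\phi]$ built from sub- and supersolutions, together with a second solution lying above a larger subsolution. The first step is to set up the correct functional-analytic frame: because $p=2$ the leading operator $-\De+(-\De)^s$ is linear, so on a suitable ordered Banach space (the natural choice is a weighted Hölder-type space or the cone $C^1_0$-type space adapted to the mixed operator, in which solutions of the singular problem are known to live with the expected boundary decay) one can define a completely continuous solution operator $\mathcal{S}_\la$ whose fixed points are exactly the positive solutions of \eqref{P_lambda}. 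One must verify the standard ingredients of Amann's three-solution theorem: $\mathcal{S}_\la$ maps the order interval $[\uphi,\overline\phi]$ into itself, it is compact, and the two known solutions are \emph{strict} sub/supersolutions in the ordered-topology sense.

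The second and most delicate step is upgrading the weak ordering $u_1<u_2$ to a \emph{strong} ordering, i.e. showing $u_2-u_1$ lies in the interior of the positive cone of the chosen space. This is exactly where Theorem~\ref{Hopf lemma_linear case} enters: writing the difference and using $(f2)$ together with the monotonicity of $t\mapsto f(t)/t^{\ba}$ on the relevant range $(\si_1,\si_2)$ from $(f4)$, one shows $-\De(u_2-u_1)+(-\De)^s(u_2-u_1)\ge 0$ with $u_2\not\equiv u_1$, so the strong comparison principle yields $u_2-u_1>0$ in $\Om$ with the correct Hopf boundary behaviour, hence $u_2-u_1$ is interior to the cone. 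The same argument applied at the two endpoints shows $\uphi$ and $\overline\phi$ can be taken to be strict sub- and supersolutions and that neither $u_1$ nor $u_2$ touches them. With strict ordering in hand, Amann's theorem (the version for ordered Banach spaces with a completely continuous increasing operator, or the degree-theoretic form counting fixed points in the three subregions) produces a third fixed point of $\mathcal{S}_\la$ in the "remaining" region $[\uphi,\overline\phi]\setminus([\uphi,u_2]\cup[u_1,\overline\phi])$, which is therefore a positive solution of \eqref{P_lambda} distinct from $u_1$ and $u_2$.

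A technical point requiring care throughout is that the right-hand side $\la f(u)u^{-\ba}$ is singular at the boundary, so $\mathcal{S}_\la$ is not defined on an open subset of $C(\overline\Om)$ in the naive way; one fixes this either by working in the natural weighted space in which $u\sim d(x,\pa\Om)$ and the singular nonlinearity becomes an admissible map (the linearity $p=2$ makes the requisite Schauder/regularity estimates for $-\De+(-\De)^s$ available), or by truncating below by the minimal solution $u_1>0$, on which $u^{-\ba}$ is bounded, so that degree theory can be run in an order interval bounded away from $0$. I expect the main obstacle to be precisely this: verifying that $\mathcal{S}_\la$ is completely continuous and that the order interval has nonempty interior in the relevant topology for the singular mixed operator — in other words, establishing the boundary-regularity and Hopf estimates that make the strong comparison principle applicable and the cone interior nonempty. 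Once Theorem~\ref{Hopf lemma_linear case} is granted and the functional setting is pinned down, the conclusion follows from Amann's theorem essentially verbatim as in the proof of Theorem~\ref{theorem_2}, the only change being the bookkeeping forced by the singular weight.
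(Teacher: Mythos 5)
Your proposal follows essentially the same route as the paper: define the solution operator $T$ on the order interval in $C_e(\overline{\Om})$, verify its complete continuity via the boundary regularity theory for the singular mixed problem, and use the Hopf-type strong comparison principle (Theorem \ref{Hopf lemma_linear case}) to separate the solutions from the boundaries of the two disjoint sub-intervals so that Amann's theorem produces a third fixed point. One small imprecision: the two solutions from Theorem \ref{theorem_1} are not globally ordered as $u_1<u_2$; they lie in the \emph{disjoint} intervals $[\phi_1,\psi_2]$ and $[\phi_2,\psi_1]$ with $\phi_2\not\leq\psi_2$, which is the configuration Amann's theorem actually requires.
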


The proofs of these theorems are based on the method of sub- and supersolutions. In this method, the central idea is that given a subsolution $\phi$ and a supersolution $\psi$ with $\phi \leq \psi$, we anticipate the existence of a solution $z$ such that $\phi \leq z\leq \psi$. Extending this approach, the three-solution theorem asserts that if we have two pairs $(\phi_1,\psi_1)$ and $(\phi_2,\psi_2)$ of sub- and supersolutions satisfying $\phi_1 \leq \phi_2 \leq \psi_1$, $\phi_1 \leq \psi_2 \leq \psi_1$, $\phi_2 \not\leq \psi_2$ and $\phi_2$, $\psi_2$ are respectively strict sub- and supersolutions, then there exist three distinct solutions $z_1 \in \left[\phi_1, \psi_2\right]$, $z_2 \in \left[\phi_2, \psi_1\right]$ and $z_3 \in \left[\phi_1,\psi_1\right]\setminus \left( \left[\phi_1,\psi_2\right] \cup \left[\phi_2,\psi_1\right] \right)$, where $[\phi_1, \psi_1]$ and similar terms denote the respective ordered intervals.

The paper is organized as follows: Section \ref{section-preliminaries} provides a brief overview of the preliminary results. In Section \ref{section-construction}, we construct two pairs of sub- and supersolutions with the required properties. Section \ref{section-multiplicity} focuses on the multiplicity result stated in Theorem \ref{theorem_1}. Next, in Section \ref{section-third solution}, we establish the existence of a third positive solution (Theorem \ref{theorem_2}) in the non-singular case ($1<p<\infty$ and $\ba =0$). In Section \ref{section-third solution- linear singular case}, we first prove a Strong Comparison Principle (Theorem \ref{Hopf lemma_linear case}) for singular problems in the linear case ($p=2$ and $0<\beta<1$) and then establish Theorem \ref{theorem_3}.


\section{Preliminaries}\label{section-preliminaries}

We begin this section by laying out the functional framework and defining notations that will be used consistently throughout the paper. Let $\Om \subset \sr^N$ be a bounded domain with smooth boundary $\pa\Om$ and fix $1<p<\infty$ and $s\in(0,1)$. We recall the standard Sobolev space $W^{1,p}(\Om)$, which is a Banach space equipped with the norm
\[\|u\|_{W^{1,p}(\Om)}:= \left(\|u\|^p_{L^p(\Om)} + \|\na u\|^p_{L^p(\Om)} \right)^{1/p}.\]
The space $\wop$ is defined as the closure of $\test$ with respect to the above norm. Next, the fractional Sobolev space $W^{s,p}(\sr^N)$ is defined as
\[W^{s,p}(\sr^N):= \left\{ u\in L^p(\sr^N): [u]_{s,p} < \infty \right\}, \]
where the term $[u]_{s,p}$ is the Gagliardo seminorm, given by
\[[u]_{s,p} = \left( \int_{\sr^N \times \sr^N} \frac{|u(x)-u(y)|^p}{|x-y|^{N+ps}}\ dx\ dy \right)^{1/p}.\]
The vector space space $W^{s,p}(\sr^N)$ is also a Banach space with the norm
\[\|u\|_{W^{s,p}(\sr^N)} = \left(\|u\|^p_{L^p(\sr^N)} + [u]^p_{s,p} \right)^{1/p}.\]

Now, to address both local and nonlocal operators within a unified framework, we introduce the function space $\mathcal{X}^{1,p}_0(\Om)$, defined as the completion of $\test$ with respect to the norm
\[\|u\|_{\mathcal{X}^{1,p}_0(\Om)}:= \left( \|u\|^p_{W^{1,p}(\sr^N)} + [u]^p_{s,p} \right)^{1/p},\ u\in\test.\]
This definition ensures that if $u\in \mathcal{X}^{1,p}_0(\Om)$, then $u=0$ a.e. in $\bdry$, thereby naturally conforming to the Dirichlet boundary condition.

Since the embedding $W^{1,p}(\sr^N) \hookrightarrow W^{s,p}(\sr^N)$ holds, there exists a constant $C=C(N,p,s,\Om)$ such that
\[[u]^p_{s,p} \leq C \|u\|^p_{W^{1,p}(\sr^N)} = C \left(\|u\|^p_{L^p(\Om)} + \|\na u\|^p_{L^p(\Om)} \right)\]
for every $u\in\test$. Combined with the Poincar\'e inequality, this implies that the norm $\|\cdot\|_{\mathcal{X}^{1,p}_0(\Om)}$ is equivalent to the gradient norm $\|u\|= \|\na u\|_{L^p(\Om)}$ on $\test$.

Further, due to the smoothness of $\pa\Om$, the space $\mathcal{X}^{1,p}_0(\Om)$ can be identified with the space $\wop$. Specifically, we shall interpret any $u\in\wop$ as its zero extension $\tilde{u}:= u\cdot \mathbf{1}_{\Om} \in \mathcal{X}^{1,p}_0(\Om)$, where $\mathbf{1}_{\Om}$ is the indicator function of $\Om$. Throughout the paper, we will make use of this tacit identification without additional remark.

\begin{definition}
    We say $u\in\wop$ is a solution to the problem \eqref{P_lambda} if $ess\ inf_K$ $u >0$ for every $K\Subset\Om$ and for every $\varphi\in\test$,
    \begin{equation*}
        \hspace{-20pt}\int_{\Om} |\na u|^{p-2} \na u \cdot \na\varphi\ dx + \int_{\sr^N \times \sr^N} \hspace{-12pt} \frac{|u(x) - u(y)|^{p-2} \left(u(x)-u(y)\right) \left(\varphi (x) - \varphi (y)\right)}{|x-y|^{N+ps}}\ dx\ dy = \la \int_{\Om} \frac{f(u)}{u^{\ba}} \varphi\ dx.
    \end{equation*}
    Similarly, we define a function $u\in\wop$ as a subsolution or a supersolution to \eqref{P_lambda} if $ess\ inf_K$ $u >0$ for every $K\Subset\Om$ and the above identity holds for every non-negative $\varphi\in\test$ with the equality sign being replaced by $\leq$ or $\geq$ respectively.
\end{definition}

To facilitate the analysis, we introduce the following function spaces. For any fixed function $\varphi \in C_0 \omc$ with $\varphi > 0$ in $\Om$, define
\[C_{\varphi} (\overline{\Om}):= \left\{u\in C_0 \omc : \exists\ c>0 \text{ such that } |u(x)|\leq c\varphi (x) \text{ in } \Om \right\}\]
\[ \text{and } C_{\varphi}^+ (\overline{\Om}):= \left\{u\in C_{\varphi} \omc : \inf_{x\in\Om} \frac{u(x)}{\varphi (x)} >0 \right\}. \]
We denote the distance function by $d(x)$, defined as $d(x)= dist (x,\pa\Om)$ for $x\in\Om$. For two functions $f_1, f_2 \in C\omc$, we write $f_1\sim f_2$ to express that there exist two positive constants $c_1, c_2$ such that $c_1 f_2(x) \leq f_1(x) \leq c_2 f_2(x)$ for all $x\in\Om$.

Next, we introduce two functions $\phi_0$ and $\psi_0$ that will be instrumental in the construction of sub- and supersolutions of \eqref{P_lambda} in Section \ref{section-construction}.
Since $\Om$ is a bounded domain, let $x_0$ be a point in $\Om$ and $R_0$ be a positive number such that $B_{R_0} (x_0)$ is the largest ball inscribed in $\Om$. Fix $0<R<R_0$ and suppose $\phi_0$ is the unique positive solution of the following PDE
\begin{equation}\label{phi_0}
    \begin{aligned}
        -\p \phi_0 + \ps \phi_0 &= \chi_{B_R} \text{ in } \Om\\
        \phi_0 &= 0 \text{ in } \bdry
    \end{aligned}
\end{equation}
where $B_R$ is the ball of radius $R$ around the point $x_0$. Then, by Theorem 1.1 and 1.2 of \cite{antonini2025global}, $\phi_0 \in C^{1,\alpha} \omc$, for some $\alpha\in (0,1)$ and $\phi_0 (x) \sim d(x)$ in $\Om$. Let $C_0, C_1$ be positive constants such that
\begin{equation}\label{C_0_C_1}
    C_0 d(x) \leq \phi_0 (x) \leq C_1 d(x), \forall\ x\in\Om.
\end{equation}

Now, consider another function $\psi_0$ which is the unique solution of
\begin{equation}\label{psi_0}
    \begin{aligned}
        -\p \psi_0 + \ps \psi_0 = \frac{2}{\psi_0^{\ba}} \text{ in } \Om\\
        \psi_0 >0 \text{ in } \Om,\ \psi_0 = 0 \text{ in } \bdry.
    \end{aligned}
\end{equation}
For the $\ba=0$ case, Theorem 1.1 and 1.2 of \cite{antonini2025global} ensures that $\psi_0 \in C^{1,\alpha} \omc$ for some $\alpha\in (0,1)$ and $\psi_0 (x) \sim d(x)$ in $\Om$. In the singular case, i.e. when $0<\ba<1$, the regularity results of \cite{dhanya2024interiorboundaryregularitymixed} guarantee that $\psi_0 \in C^{1,\alpha} \omc$ and $\psi_0 (x) \sim d(x)$ in $\Om$.

With all the necessary elements introduced, we are now in a position to provide explicit expressions for the constants $b$ and $B$ appearing in $(f4)$ and $(f5)$ respectively. We define
\begin{equation}\label{b_B}
    b:=\frac{2(R_0 - R)C_0}{(diam\ \Om) C_1} <1 \text{ and } B:= \frac{2\|\psi_0\|_{\infty}^{p-1+\ba}}{C_0 (R_0 -R)^{p-1}},
\end{equation}
where $diam\ \Om$ denotes the diameter of $\Om$ and $C_0, C_1$ are given by \eqref{C_0_C_1}.

Finally, throughout the paper, we use $C$ to represent arbitrary positive constants, unless specified otherwise.


\section{Construction of Sub-Supersolutions}\label{section-construction}

Let $e$ be the unique solution of 
\begin{equation}\label{e}
    \begin{aligned}
        -\p e + \ps e &= 1 \text{ in } \Om\\
        e &= 0 \text{ in } \bdry.
    \end{aligned}
\end{equation}
Then by the global regularity result and Hopf lemma of \cite{antonini2025global}, $e\in C^{1,\alpha} \omc$, $e>0$ in $\Om$ and $\frac{\pa e}{\pa \nu}<0$ on $\pa\Om$.

\begin{lemma}\label{first_pair}
    Assume $(f1)$-$(f3)$. Then for every $\la>0$, there exist positive numbers $m \ll 1$ and $M\gg 1$, both depending on $\la$, such that the functions $\phi_1:= me$ and $\psi_1:= M\psi_0$ serve respectively as strict sub- and supersolutions of \eqref{P_lambda}.
\end{lemma}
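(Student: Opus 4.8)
The plan is to exploit the positive $(p-1)$-homogeneity of the operator $L:=-\p+\ps$. First I would record that $\p(cu)=c^{p-1}\p u$ and, directly from its integral representation, $\ps(cu)=c^{p-1}\ps u$ for every constant $c>0$; consequently the function $e$ of \eqref{e} makes $\phi_1=me$ a weak solution of $L(me)=m^{p-1}$ for any $m>0$, while $\psi_0$ of \eqref{psi_0} makes $\psi_1=M\psi_0$ a weak solution of $L(M\psi_0)=2M^{p-1}\psi_0^{-\ba}=2M^{p-1+\ba}(M\psi_0)^{-\ba}$ for any $M>0$. Because in the definition of sub/supersolution the test-function inequality is implied by the corresponding pointwise a.e.\ inequality (multiply by a non-negative $\varphi\in\test$ and integrate), the lemma reduces to two elementary pointwise comparisons, one forcing $m\ll1$ and the other $M\gg1$. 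Note also that $me,\,M\psi_0\in\wop$ and, since $e,\psi_0$ are continuous and strictly positive in $\Om$, $\mathrm{ess\,inf}_{K}(me)>0$ and $\mathrm{ess\,inf}_{K}(M\psi_0)>0$ for every $K\Subset\Om$, so these candidates are admissible in the sense defined above.

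For $\phi_1=me$ I would use that $e\in C^{1,\alpha}\omc$ is positive in $\Om$, hence $0\le me\le m\|e\|_{\infty}$, together with $f(me)\ge f(0)>0$ coming from $(f1)$-$(f2)$, to obtain
\[\la\,\frac{f(me)}{(me)^{\ba}}\ \ge\ \frac{\la f(0)}{(m\|e\|_{\infty})^{\ba}}\ \ge\ m^{p-1}\qquad\text{in }\Om,\]
which holds as soon as $m^{p-1+\ba}\|e\|_{\infty}^{\ba}\le\la f(0)$ (for $\ba=0$ this is simply $m^{p-1}\le\la f(0)$). Fixing such an $m$ — necessarily small when $\la$ is small — turns $me$ into a subsolution of \eqref{P_lambda}.

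For $\psi_1=M\psi_0$, since $0<\psi_0\le\|\psi_0\|_{\infty}$ and $f$ is non-decreasing I would bound $f(M\psi_0)\le f(M\|\psi_0\|_{\infty})$ and observe that
\[L(M\psi_0)=\frac{2M^{p-1+\ba}}{(M\psi_0)^{\ba}}\ \ge\ \la\,\frac{f(M\psi_0)}{(M\psi_0)^{\ba}}\qquad\text{in }\Om\]
whenever $2M^{p-1+\ba}\ge\la f(M\|\psi_0\|_{\infty})$. The decisive ingredient is $(f3)$: since
\[M^{-(p-1+\ba)}f(M\|\psi_0\|_{\infty})=\|\psi_0\|_{\infty}^{p-1+\ba}\,\frac{f(M\|\psi_0\|_{\infty})}{(M\|\psi_0\|_{\infty})^{p-1+\ba}}\ \longrightarrow\ 0\quad\text{as }M\to\infty,\]
the inequality holds for all large $M$, giving the supersolution. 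Finally, because $e(x)\sim d(x)\sim\psi_0(x)$ in $\Om$ there is $c>0$ with $e\le c\psi_0$, so by enlarging $M$ (or shrinking $m$) if necessary one also gets $\phi_1=me\le M\psi_0=\psi_1$, the ordering needed for the sub-supersolution method.

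I do not expect a genuine obstacle here. The one delicate point is the singular weight $(\cdot)^{-\ba}$, which in each estimate is tamed by replacing $e$ and $\psi_0$ by their sup-norms; apart from that it only remains to verify that the two prescriptions ``$m$ small'' and ``$M$ large'' are mutually compatible, which they are, since they bind the free constants independently and the comparison $e\le c\psi_0$ reconciles both with the required ordering $\phi_1\le\psi_1$.
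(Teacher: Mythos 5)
Your proposal is correct and follows essentially the same route as the paper: exploit the $(p-1)$-homogeneity of $-\p+\ps$, choose $m$ small using $f(0)>0$ (resp.\ the behaviour of $f(t)/t^{\ba}$ near $0$) for the subsolution $me$, and choose $M$ large using the sublinearity condition $(f3)$ evaluated at $M\|\psi_0\|_{\infty}$ for the supersolution $M\psi_0$. The only cosmetic difference is that your single inequality $m^{p-1+\ba}\|e\|_{\infty}^{\ba}\le \la f(0)$ treats $\ba=0$ and $\ba>0$ uniformly, and you additionally record the ordering $\phi_1\le\psi_1$, which the paper defers to a later remark.
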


\begin{proof}
    Let $\la>0$ be fixed. Since $f(0)>0,$ we have $\lim_{t\rightarrow 0^+}\frac{f(t)}{t^{\ba}} \rar \infty.$ Hence, we can choose a sufficiently small constant $m\in (0,1)$ such that $\frac{2}{\la} \leq \frac{f(me)}{(me)^{\ba}}.$ Define $\phi_1:= me.$ Using the homogeneity of the operator and the choice of $m$, we obtain 
    \begin{equation}\label{sub_soln_1}
        -\p \phi_1 + \ps \phi_1 = m^{p-1} \,\leq \, 1 \,  \leq \frac{\la}{2} \frac{f(me)}{(me)^{\ba}} \; < \, \la \frac{f(\phi_1)}{\phi_1^{\ba}} \text{ in } \Om.
    \end{equation}
    Therefore, $\phi_1$ is a strict subsolution of \eqref{P_lambda}.
    
    To construct a supersolution, we first observe from the condition $(f3)$ that $\lim_{t\rightarrow \infty} \frac{f(t)}{t^{p-1+\beta}}=0.$ Therefore, we can choose an $M\gg 1$ such that
    \[\frac{f\left(M \|\psi_0\|_{\infty} \right)}{\left(M \|\psi_0\|_{\infty} \right)^{p-1+\ba}} \leq \frac{1}{\la \|\psi_0\|_{\infty}^{p-1+\ba}}.\]
    Define $\psi_1:=M \psi_0.$ Now using the homogeneity of the operator, the monotonicity of $f$ and the construction of $M$, we obtain
    \begin{equation*}
        -\p \psi_1 + \ps \psi_1 = \frac{2 M^{p-1}}{\psi_0^{\ba}} > \frac{M^{p-1}}{\psi_0^{\ba}} = \frac{M^{p-1+\ba}}{\psi_1^{\ba}} \geq \la \frac{f\left(M \|\psi_0\|_{\infty}\right)}{\psi_1^{\ba}} \geq \la \frac{f\left(\psi_1\right)}{\psi_1^{\ba}} \text{ in } \Om.
    \end{equation*}
    This proves that $\psi_1$ is a strict supersolution of \eqref{P_lambda}.
\end{proof}
    
\begin{remark}
    If $\la\in (\underline{\la}, \overline{\la})$ for some fixed $\underline{\la}>0$ and $\overline{\la}<\infty,$  then the constants $m$ and $M$ can be chosen uniformly for all $\lambda$ in this interval. In other words, $m$ and $M$ then depend only on $\underline{\la}$ and $\overline{\la}.$
\end{remark}

In the next two lemmas, we build a second pair of sub-supersolutions for the problem \eqref{P_lambda}.

\begin{lemma}\label{second_supersolution}
    Assume $(f1)$-$(f5)$. Then there exists $\la_1 >0$ such that for every $\la < \la_1$, the problem \eqref{P_lambda} admits a second strict supersolution $\psi_2$.
\end{lemma}

\begin{proof}
    Define $\psi_2:= \frac{\si_1 \psi_0}{\|\psi_0\|_{\infty}} $, where $\psi_0$ is given by \eqref{psi_0} and $\sigma_1$ is the constant given in $(f4)$. Set
    \begin{equation}\label{lambda_3}
        \la_1:= \frac{\si_1^{p-1+\ba}}{f(\si_1) \|\psi_0\|_{\infty}^{p-1+\ba}}.
    \end{equation}
    Using the monotonicity of $f,$ we deduce the following holds in $\Om$ for all $\la<\lambda_1$
    \begin{equation*}
        -\p \psi_2 + \ps \psi_2 = \frac{2 \si_1^{p-1}}{\psi_0^{\ba}\|\psi_0\|_{\infty}^{p-1}} > \frac{1}{\psi_0^{\ba}} \left( \frac{\si_1}{\|\psi_0\|_{\infty}} \right)^{p-1} = \frac{1}{\psi_2^{\ba}} \left( \frac{\si_1}{\|\psi_0\|_{\infty}} \right)^{p-1 +\ba} \hspace{-8pt} > \frac{\la f(\si_1)}{\psi_2^{\ba}} \geq \frac{\la f(\psi_2)}{\psi_2^{\ba}}.
    \end{equation*}
    Thus $\psi_2$ is a strict supersolution of \eqref{P_lambda} for every $\la < \la_1$.
\end{proof}

Next, we introduce a cut-off function $h$ which will facilitate the construction of the second subsolution. Recall that $\frac{f(t)}{t^{\ba}} \rar \infty \text{ as } t\rar 0+$ and let $\sigma_1,\sigma_2$ be as given in assumption $(f4).$ There exists a point $\si_* \in (0, \si_1]$ such that $\inf_{0<t\leq \si_1} \frac{f(t)}{t^{\ba}}$ is attained at the point $\si_*$, that is,
$$\inf_{0<t\leq \si_1} \frac{f(t)}{t^{\ba}} = \frac{f(\sigma_*)}{\sigma_*^{\ba}}.$$
Now, we define a $C^1$ function $h: [0,\infty) \rar \sr^+$ satisfying
\begin{equation}
    h(t)=\begin{cases}
        \frac{f(\si_*)}{\si_*^{\ba}}, \text{ for } t\leq \si_* \vspace{3mm}\\
        \frac{f(t)}{t^{\ba}}, \text{ for } t\geq \si_1
        \end{cases}
\end{equation}
and extend $h$ on the interval $(\si_*, \si_1)$ so that the following properties hold:
\begin{enumerate}
    \item[$(h1)$] $h$ is non-decreasing on $(0,\si_2)$,
    
    \item[$(h2)$] $h(t)\leq \frac{f(t)}{t^{\ba}}$ for all $t> 0.$
\end{enumerate}

\begin{lemma}\label{second_subsolution}
    Assume $(f1)$-$(f5)$. Then there exist $0<\la_2 <\la_3$ such that for every $\la\in (\la_2, \la_3)$, the problem \eqref{P_lambda} admits a second strict subsolution $\phi_2$ satisfying $\phi_2 \not\leq \psi_2$ in $\Om$.
\end{lemma}

\begin{proof}
   Let $\phi_2$ be the solution of the following auxiliary problem:
   \begin{equation*}
       \begin{aligned}
           -\p \phi_2 + \ps \phi_2 = \frac{\la}{2} h(a) \chi_{B_R} \text{ in } \Om\\
           \phi_2 > 0 \text{ in } \Om,\ \phi_2 =0 \text{ in } \bdry
       \end{aligned}
   \end{equation*}
   where $a$ is given in $(f5)$ and $B_R$ as introduced in equation \eqref{phi_0}, Section \ref{section-preliminaries}. Our aim is to show that $\phi_2$ is a strict subsolution of \eqref{P_lambda} for all $\la\in (\lambda_2,\lambda_3)$ where
   \begin{equation}\label{lambda_2}
       \lambda_2:= \frac{2a^{p-1}}{C_0^{p-1} (R_0 - R)^{p-1} h(a)}\;\;\;\;\mbox{and} \;\;\;\; \lambda_3:= \frac{2\si_2^{p-1}}{C_1^{p-1} \left( \frac{diam\ \Om}{2}\right)^{p-1} h(a)}.
   \end{equation}
   We first establish the following:
   
   \noindent\textbf{Claim:} For every $ \lambda\in (\la_2,\la_3)$ the function $\phi_2$ satisfies $\phi_2(x)\geq a$ for all $x\in B_R$ and $0<\phi_2(x)\leq \sigma_2$ for all $x\in \Omega$.
   
   By the homogeneity of the operator, it is easy to verify that
   \begin{equation}\label{est_second_subsolution}
       C_0 \left(\frac{\la}{2} h(a) \right)^{\frac{1}{p-1}} d(x) \leq \phi_2 (x) \leq C_1 \left(\frac{\la}{2} h(a) \right)^{\frac{1}{p-1}} d(x),\ \forall\ x\in\Om,
   \end{equation}
   where $C_0,C_1$ are given in \eqref{C_0_C_1} and $d(x)$ denotes distance of $x$ to the boundary of $\Omega$. 
   
   If $x\in B_R,$ then $d(x)\geq (R-R_0).$ Consequently, for $\la>\la_2$ it follows that $\phi_2 \geq a$ in $B_R.$ Similarly, if $\la<\la_3,$ using the properties of distance function we have $$ \phi_2(x)\leq C_1 \left(\frac{\la}{2} h(a) \right)^{\frac{1}{p-1}} \left(\frac{diam\ \Om}{2}\right) < \si_2 .$$ This verifies the claim.
    
    Further, observe that the condition $a< b\si_2$ in $(f5)$ ensures that $\la_2 < \la_3.$ Thus, for every 
    $\la\in(\la_2,\la_3)$ we now have 
    $$\|\phi_2\|_{L^\infty(\Omega)} \leq \si_2 \mbox{ and }\phi_2 \geq a \mbox{ on } B_R.$$
    Using the properties $(h1)$ and $(h2)$ of $h$, for every $\la\in(\la_2,\la_3)$ we obtain 
    \begin{equation*}
        -\p \phi_2 + \ps \phi_2= \frac{\la}{2} h(a)\chi_{B_R} \leq \frac{\la}{2} h(\phi_2) \leq \frac{\la}{2} \frac{f(\phi_2)}{\phi_2^{\ba}} < \la \frac{f(\phi_2)}{\phi_2^{\ba}} \text{ in } \Om.
    \end{equation*}
    This implies $\phi_2$ is a strict sub-solution of $(P_\la)$ whenever $\la\in(\la_2,\la_3).$ Finally, we note that $\|\psi_2\|_{\infty} = \si_1$, $\phi_2 \geq a$ on $B_R$ and $\si_1 < a$ together confirm that $\phi_2 \not\leq \psi_2$ in $\Om$. This completes the proof.
\end{proof}
    
\begin{remark}\label{region_multiplicity}
    Note that the condition $Q(\si_1,a)>B$ in $(f5)$ implies that $\la_2 < \la_1$. Therefore, we define $\la_* := \la_2$ and $\la^* := \min \{ \la_1, \la_3\}$ so that the open interval $(\la_*, \la^*)$ is non-empty and for every $\la\in(\la_*, \la^*)$, there exist two pairs $(\phi_1,\psi_1)$ and $(\phi_2,\psi_2)$  of (strict) sub- and supersolutions of \eqref{P_lambda} with $\phi_2 \not\leq \psi_2$ in $\Om$.
\end{remark}

\begin{remark}
    In lemma \ref{first_pair}, observe that if $me$ is a subsolution of \eqref{P_lambda}, so is the function $\tilde{m} e$ for any $0<\tilde{m} \leq m$. Similarly, if $M\psi_0$ is a supersolution of \eqref{P_lambda}, so is the function $\tilde{M} \psi_0$ for any $\tilde{M} \geq M$. In other words, we can choose $m>0$ small enough and $M$ large enough so that $\phi_1=me$ and $\psi_1 = M\psi_0$ are respectively sub- and supersolutions of \eqref{P_lambda} satisfying $\phi_1 \leq \phi_2 \leq \psi_1$ and $\phi_1 \leq \psi_2 \leq \psi_1$ as well.
\end{remark}


\section{Existence of Two Positive Solutions}\label{section-multiplicity}

We now proceed to the proof of Theorem \ref{theorem_1}. To handle the singular case effectively, we adapt the approach from \cite{dhanya2015three}, rewriting the equation by absorbing the singular term into the operator. The reformulated equation is the following:
\begin{equation}\tag{$\tilde{P}_{\la}$}\label{P_lambda_tilde}
    \begin{aligned}
        -\p u + \ps u &-\la \frac{f(0)}{u^{\ba}} = \tilde{f}(u) \text{ in } \Om\\
        u >0 \text{ in } \Om,\ u &=0 \text{ in } \bdry
    \end{aligned}
\end{equation}
where $\ftil (t) = \la \left(\frac{f(t)-f(0)}{t^{\ba}}\right)$, $t>0$. Clearly, $\ftil$ is a continuous function on $[0,\infty)$ with $\ftil (0)=0$, as a consequence of $f\in C^1([0,\infty))$ and the mean value theorem. We also assume that
\begin{itemize}
    \item[$(h)$] there exists $\tilde{k}>0$ such that the map $t \mapsto \ftil (t) +\tilde{k} t$ is increasing in $[0,\infty)$.
\end{itemize}
Therefore, without loss of generality, we can consider $\ftil$ increasing in $[0,\infty)$ because if not, we can proceed with studying the following modified problem and obtain the required results:
\begin{equation*}
    \begin{aligned}
        -\p u + \ps u -\la \frac{f(0)}{u^{\ba}} + \tilde{k}u &= \tilde{f}(u) +\tilde{k} u \text{ in } \Om\\
        u >0 \text{ in } \Om,\ u &=0 \text{ in } \bdry.
    \end{aligned}
\end{equation*}
Thus, in summary, we consider $\ftil$ as a continuous increasing function in $[0,\infty)$ with $\ftil (0) =0$. Here, we once again emphasize the fact that the above argument is necessary specifically for the singular case, as in the case when $\ba =0$, this issue is inherently addressed by the condition $(f2)$.

Now, we consider the following admissible set
\[ \mathcal{A} := \left\{ u\in \wop : u \in L^{\infty} (\Om),\ \phi_1\leq u \leq \psi_1 \text{ a.e. in } \Om \right\}, \]
where $\phi_1$ and $\psi_1$ are respectively sub- and supersolutions constructed in lemma \ref{first_pair}. We intend to define a solution map $T: \mathcal{A} \rar \mathcal{A} $ such that the fixed points of $T$ correspond to the weak solutions of our problem \eqref{P_lambda_tilde}.
\begin{definition}
    Define $T: \mathcal{A} \rar \mathcal{A} $ by $T(u) = z$ iff $z$ is a weak solution of
    \begin{equation}\tag{$S_{\la}$}\label{s_lambda}
        \begin{aligned}
        -\p z + \ps z &-\la \frac{f(0)}{z^{\ba}} = \tilde{f}(u) \text{ in } \Om\\
        z >0 \text{ in } \Om,\ z &=0 \text{ in } \bdry.
    \end{aligned}
    \end{equation}
\end{definition}
In the above definition, by saying $z$ is a solution of \eqref{s_lambda} we mean $z\in \wop$ with $ess\ inf_K z >0$ for every $K\Subset\Om$ and for every $\varphi\in\test$,
\begin{multline*}
    \hspace{-16pt}\int_{\Om} |\na z|^{p-2} \na z \cdot \na\varphi\ dx + \int_{\sr^N \times \sr^N} \hspace{-12pt} \frac{\left|z(x) - z(y)\right|^{p-2} \left(z(x)-z(y)\right) \left(\varphi (x) - \varphi (y)\right)}{|x-y|^{N+ps}}\ dx\ dy\\
    - \la f(0) \int_{\Om} \frac{\varphi}{z^{\ba}}\ dx = \int_{\Om} \tilde{f}(u) \varphi\ dx.
\end{multline*}
\begin{remark}
    If, additionally, we have $z \geq cd(x)$ in $\Om$ for some constant $c>0$, then the above identity holds for every $\varphi\in\wop,$ due to Hardy's inequality.
\end{remark}    

\begin{lemma}\label{singular_welldefined}
    The map $T: \mathcal{A} \rar \mathcal{A}$ is well-defined.
\end{lemma}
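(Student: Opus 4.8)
The plan is to verify, for each fixed $u \in \mathcal{A}$, three things: that the auxiliary problem \eqref{s_lambda} has a solution $z$; that this solution is unique; and that $z$ lies in $\mathcal{A}$. The starting observation is that since $u \in \mathcal{A}$ satisfies $0 \le \phi_1 \le u \le \psi_1$ with $u \in L^\infty(\Om)$, and $\ftil$ is continuous, increasing and $\ftil(0)=0$, the datum $\ftil(u)$ is a fixed nonnegative function in $L^\infty(\Om)$; so \eqref{s_lambda} is a purely singular problem with bounded right-hand side.

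For existence I would first record that $\phi_1 = me$ and $\psi_1 = M\psi_0$ are, respectively, a subsolution and a supersolution of \eqref{s_lambda}. Indeed, subtracting $\la f(0)\phi_1^{-\ba}$ from the subsolution inequality \eqref{sub_soln_1} gives $-\p\phi_1 + \ps\phi_1 - \la f(0)\phi_1^{-\ba} \le \ftil(\phi_1) - \epsilon_1 \le \ftil(u)$, using $\phi_1 \le u$ and the monotonicity of $\ftil$; the supersolution inequality for $\psi_1$ is symmetric, using $u \le \psi_1$ and the slack $\delta_1$. I would then obtain a solution trapped between $\phi_1$ and $\psi_1$ by the usual regularization of the singularity: replace $z^{-\ba}$ by $(z^+ + 1/n)^{-\ba}$ so the right-hand side becomes bounded, and solve the resulting equation for $-\p + \ps$ by standard monotone-operator/fixed-point arguments. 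Weak comparison against $\phi_1$ and $\psi_1$ (which remain sub-/supersolutions of the regularized problems because $(z^+ + 1/n)^{-\ba} \le z^{-\ba}$) yields $\phi_1 \le z_n \le \psi_1$; testing with $z_n$ together with the bound $z_n \le \|\psi_1\|_\infty$ gives a uniform estimate in the mixed norm; and since $z_n \le \|\psi_1\|_\infty$ forces $(z_n^+ + 1/n)^{-\ba} \ge (\|\psi_1\|_\infty + 1)^{-\ba}$, weak comparison with a multiple of $e$ produces a uniform lower bound $z_n \ge c\, d(x)$. With these in hand — crucially the uniform positivity, which via Hardy's inequality keeps the singular term bounded in $(\wop)'$ — one passes to the limit to get a solution $z$ of \eqref{s_lambda} with $\phi_1 \le z \le \psi_1$ and $z \ge c\, d(x)$; this last property is exactly what lets us use test functions from $\wop$, as in the remark preceding the lemma.

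Uniqueness, which is what makes $T$ genuinely single-valued, follows from the monotone structure of \eqref{s_lambda}. If $z_1, z_2$ are two solutions (both bounded below by a multiple of $d$, hence $(z_1-z_2)^\pm \in \wop$ are admissible), subtracting the equations and testing with $(z_1-z_2)^+$ gives
\[
\langle (-\p z_1 + \ps z_1) - (-\p z_2 + \ps z_2),\ (z_1 - z_2)^+ \rangle
= \la f(0) \int_\Om \big( z_1^{-\ba} - z_2^{-\ba} \big) (z_1 - z_2)^+ \, dx .
\]
The left-hand side is $\ge 0$ by the monotonicity of $-\p + \ps$ and vanishes only if $(z_1-z_2)^+$ is constant, hence $\equiv 0$; the right-hand side is $\le 0$ because $t \mapsto -t^{-\ba}$ is increasing. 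So both sides are zero and $(z_1-z_2)^+ \equiv 0$; by symmetry $z_1 = z_2$. The same computation with $z_1$ replaced by the subsolution $\phi_1$ (resp. $z_2$ by the supersolution $\psi_1$) re-derives $\phi_1 \le z \le \psi_1$; hence $z \in \wop$, $z \le \psi_1 \in L^\infty(\Om)$, and $z \ge \phi_1 \ge C_0 m\, d(x) > 0$ in $\Om$, i.e. $z \in \mathcal{A}$.

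I expect the main obstacle to be the existence half: making the regularization rigorous for the mixed operator — the uniform mixed-norm estimates, the uniform lower bound $z_n \ge c\, d(x)$, and the passage to the limit in the singular term — which is where the global regularity and Hopf results of \cite{antonini2025global} (and \cite{dhanya2024interiorboundaryregularitymixed} in the singular case), the weak comparison principle for $-\p + \ps$, and Hardy's inequality must all be combined. If a direct sub-supersolution theorem for singular mixed problems with bounded data is available or proved elsewhere in the paper, this step reduces to invoking it. By contrast, checking that $\phi_1, \psi_1$ are sub-/supersolutions of \eqref{s_lambda} and the monotonicity argument for uniqueness and for the ordering $\phi_1 \le z \le \psi_1$ are routine.
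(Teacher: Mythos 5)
Your proposal follows essentially the same route as the paper: regularize the singular term (the paper uses $(v+\varepsilon)^{-\ba}$, you use $(z^{+}+1/n)^{-\ba}$), trap the approximating solutions between a small multiple of $e$ and a large multiple of $\psi_0$, pass to the limit using Hardy's inequality, and settle uniqueness and the ordering $\phi_1\le z\le\psi_1$ by testing with positive/negative parts. One micro-justification is backwards: you assert that $\phi_1$ remains a \emph{sub}solution of the regularized problem ``because $(z^{+}+1/n)^{-\ba}\le z^{-\ba}$,'' but that inequality makes the left-hand side $-\p\phi_1+\ps\phi_1-\la f(0)(\phi_1+1/n)^{-\ba}$ \emph{larger}, so it only helps the supersolution side; the subsolution property of a barrier for the regularized problem has to be checked directly (as the paper does with $m_0 e$ and the uniform bound $(m_0e+\varepsilon)^{-\ba}\ge(m_0\|e\|_\infty+1)^{-\ba}$ for $\varepsilon\le 1$). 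This slip is not load-bearing in your argument, since your uniform lower bound comes from the separate comparison with a multiple of $e$ and the final trapping is re-derived for the limit equation, where $\phi_1$ genuinely is a subsolution.
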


\begin{proof}
    We show that corresponding to each $u\in\mathcal{A}$, \eqref{s_lambda} has a unique solution in $\mathcal{A}$. Let $u\in\mathcal{A}$. To prove the existence of a solution, we consider the non-singular and singular cases separately.

    \textbf{Non-singular case:} The energy functional $J: \wop \rar \sr$ corresponding to \eqref{s_lambda} is given by
    \[J(v):= \frac{1}{p} \|v\|^p_{\mathcal{X}^{1,p}_0(\Om)} -\la \int_{\Om} f(u)v\ dx,\ v\in\wop. \]
    Since $f$ is continuous, $f(u)\in L^{\infty} (\Om) $. Thus, by standard minimization argument, $J$ admits a non-negative minimizer $z$ in $\wop$ and $z$ is a weak solution of \eqref{s_lambda}. Moreover, by Theorem 1.1 of \cite{antonini2025global}, we obtain $z\in C^{1,\alpha} \omc$, for some $\alpha\in (0,1)$ and since $f$ is non-decreasing with $f(0)>0$, the Strong Maximum Principle yields $z>0$ in $\Om$.

    \textbf{Singular case:} In this case, because of the lack of $C^1$ nature of the energy functional the direct minimization method cannot be applied. Hence, we establish the existence of a solution to \eqref{s_lambda} via approximation. For every $\varepsilon>0$, let us consider the following approximated problem:
    \begin{equation}\tag{$S_{\la}^{\varepsilon}$}\label{s_lambda_epsilon}
        \begin{aligned}
        -\p v + \ps v &-\la \frac{f(0)}{(v+ \varepsilon )^{\ba}} = \tilde{f}(u) \text{ in } \Om\\
        v >0 \text{ in } \Om,\ v &=0 \text{ in } \bdry.
        \end{aligned}
    \end{equation}
    The associated energy functional $J_{\varepsilon}: \wop \rar \sr$ is defined by
    \[ J_{\varepsilon}(v) = \frac{1}{p} \|v\|_{\mathcal{X}^{1,p}_0(\Om)}^p -\frac{\la f(0)}{(1-\ba)} \int_{\Om} \{(v+\varepsilon)^+ \}^{1-\ba}\ dx - \int_{\Om} \ftil (u)v\ dx,\ v\in\wop .\]
    Since $\ba<1$, $J_{\varepsilon}$ is coercive and weakly lower semicontinuous on $\wop$ and hence admits a minimizer $z_{\varepsilon} \in \wop$. Again, for every $v\in\wop, v\not\equiv 0$, we can choose $t>0$ small enough so that $J_{\varepsilon}(tv) <0$. Thus, $z_{\varepsilon}\not\equiv 0$. Moreover, the convexity of $J_{\varepsilon}$ guarantees that $z_{\varepsilon}$ is non-negative. Now, we make the following claim to show that $z_{\varepsilon}$ is a weak solution of \eqref{s_lambda_epsilon}.
    
    \noindent\textbf{Claim-1:} There exists $m_0 >0$ small enough such that $m_0 e \leq z_{\varepsilon}$ a.e. in $\Om$ for every $\varepsilon \in (0,1)$.
    
    We observe that whenever $m_0 >0$ is small enough, we have
    \begin{equation}\label{lower bound}
        -\p (m_0 e) + \ps (m_0 e) - \la \frac{f(0)}{\left(m_0 e+\varepsilon\right)^{\ba}} = m_0^{p-1} - \la \frac{f(0)}{\left(m_0 e+\varepsilon\right)^{\ba}} < 0 \leq \ftil (u)
    \end{equation}
    in $\Om$. Define $\tilde{z}_{\varepsilon}:= (m_0 e - z_{\varepsilon})^+$. If possible, let $supp\ \tilde{z}_{\varepsilon}$ have a positive measure. We then define $\eta: [0,1] \rar \sr$ by $\eta(t):= J_{\varepsilon} \left(z_{\varepsilon} + t\tilde{z}_{\varepsilon} \right)$. Since $\eta$ is convex and $z_{\varepsilon}$ is a minimizer of $J_{\varepsilon}$, we have $0\leq \eta' (0) \leq \eta' (1)$. Again, from \eqref{lower bound}, we obtain $\eta'(1) \leq \langle J_{\varepsilon}' (m_0 e), \tilde{z}_{\varepsilon} \rangle <0$, a contradiction. Therefore $supp\ \tilde{z}_{\varepsilon}$ is a set of measure zero or, equivalently, $m_0 e \leq z_{\varepsilon}$ a.e. in $\Om$ for every $\varepsilon \in (0,1)$. Thus Claim-1 is proved.

    Consequently, the energy functional $J_{\varepsilon} $ is G\^ateaux differentiable at $z_{\varepsilon}$ and hence $z_{\varepsilon}$ is a weak solution of \eqref{s_lambda_epsilon}. Moreover, since $\la \frac{f(0)}{\left(z_{\varepsilon} + \varepsilon \right)^{\ba}} + \tilde{f}(u) \in L^{\infty}(\Om) $, by Theorem 1.1 of \cite{antonini2025global} we obtain $z_{\varepsilon} \in C^{1,\alpha} \omc$ for some $\alpha\in (0,1)$.

    \noindent\textbf{Claim-2:} The sequence $\{z_{\varepsilon}\}$ is monotonically increasing, i.e, if $0<\varepsilon<\varepsilon'$ then $z_{\varepsilon'}\leq z_{\varepsilon}$, and $\{z_{\varepsilon}\}$ is bounded above.

    The monotonicity of the sequence $\{z_{\varepsilon}\}$ follows directly from the weak comparison principle by taking $\left(z_{\varepsilon'} - z_{\varepsilon}\right)^+$ as the test function. To prove $\{z_{\varepsilon}\}$ is bounded above, consider $\overline{z}= M_0 \psi_0$, where $\psi_0$ solves equation \eqref{psi_0} and $M_0 >0$ is chosen large enough so that $M_0^{p-1} \left( 2 - \frac{\la f(0)}{M_0^{\ba}} \right) \frac{1}{\|\psi_0\|_{\infty}^{\ba}} \geq \ftil \left(\|u\|_{\infty}\right)$. Then, we have
    \begin{equation*}
        -\p \overline{z} + \ps \overline{z} -\la \frac{f(0)}{\left(\overline{z} + \varepsilon\right)^{\ba}} = \frac{2M_0^{p-1}}{\psi_0^{\ba}} - \la \frac{f(0)}{\left(M_0\psi_0 + \varepsilon\right)^{\ba}}\geq \ftil (u) \text{ in } \Om.
    \end{equation*}
    Therefore, by weak comparison principle, we obtain $z_{\varepsilon} \leq \overline{z}$ in $\Om$ for every $\varepsilon>0$ and this proves Claim-2.

    We now consider the pointwise limit $z := \lim_{\varepsilon\rar 0+} z_{\varepsilon}$. From the weak formulation of \eqref{s_lambda_epsilon}, we have,
    \begin{align*}
        \|z_{\varepsilon}\|^p_{\mathcal{X}^{1,p}_0(\Om)} &\leq \la f(0) \int_{\Om} z_{\varepsilon}^{1-\ba} dx + \int_{\Om} \ftil (u) z_{\varepsilon}\ dx\\
        &\leq \la f(0) \int_{\Om} \overline{z}^{1-\ba} dx + \int_{\Om} \ftil (u) \overline{z}\ dx < \infty
    \end{align*}
    as $\overline{z} = M_0 \psi_0 \in L^{\infty}(\Om)$. Thus $\{z_{\varepsilon} \}$ is bounded in $\wop$ and upto a subsequence $z_{\varepsilon} \rightharpoonup z$ in $\wop$. Thanks to Hardy's inequality and the Dominated Convergence Theorem, we can now pass through the limit in the weak formulation of \eqref{s_lambda_epsilon} and we obtain
    \begin{equation*}
        -\p z + \ps z -\la \frac{f(0)}{z^{\ba}} = \tilde{f}(u) \text{ in } \Om,
    \end{equation*}
    that is, $z$ is a weak solution of \eqref{s_lambda}. Additionally we have $m_0 e \leq z \leq M_0 \psi_0$ a.e. in $\Om$. Hence $z \in L^{\infty}(\Om)$ and $ess\ inf_K z >0$ for every $K\Subset\Om$.

    Therefore, in both (non-singular and singular) cases, for every $u\in\mathcal{A}$, there exists $z\in L^{\infty}(\Om)$, $z>0$ a.e. in $\Om$ such that $z$ solves \eqref{s_lambda}. Also, the uniqueness of $z$ follows from a direct application of the weak comparison principle. Thus, to complete the proof, it only remains to verify that $z\in\mathcal{A}$.

    Since $\phi_1$ is a subsolution of \eqref{s_lambda}, $u\in\mathcal{A}$ and $\ftil$ is monotone increasing, we have
    \begin{equation*}
        -\p \phi_1 + \ps \phi_1 -\la \frac{f(0)}{\phi_1^{\ba}} \leq \tilde{f}(\phi_1) \leq \tilde{f}(u) = -\p z + \ps z -\la \frac{f(0)}{z^{\ba}} \text{ in } \Om.
    \end{equation*}
    Taking $(\phi_1 - z)^+$ as the test function,  we obtain $\phi_1 \leq z$ a.e. in $\Om$. In a similar way, we get $z \leq \psi_1$ a.e. in $\Om$. Therefore, $z\in \mathcal{A}$ and the map $T:\mathcal{A}\rar\mathcal{A}$ is well-defined.
\end{proof}

Now, we use the monotone iteration method to establish the existence of weak solutions to \eqref{P_lambda} or, equivalently, \eqref{P_lambda_tilde}.

\noindent\textbf{Proof of Theorem \ref{theorem_1}:} (a) We show that for every $\la>0$, \eqref{P_lambda_tilde} admits a positive solution. Fix some $\la>0$. Define $w_0 := \phi_1$ and $w_1 := T(\phi_1)$. Then $w_1 \in \mathcal{A}$ and $\phi_1 = w_0 \leq w_1 \leq \psi_1$. Next, define $w_2 := T(w_1)$. Then $w_2 \in \mathcal{A}$ and
\begin{equation*}
    -\p w_1 + \ps w_1 -\la \frac{f(0)}{w_1^{\ba}} = \ftil (w_0) \leq \ftil (w_1) = -\p w_2 + \ps w_2 -\la \frac{f(0)}{w_2^{\ba}}
\end{equation*}
so that by weak comparison principle, $\phi_1 = w_0 \leq w_1 \leq w_2 \leq \psi_1$ in $\Om$.

Proceeding in this way, we construct a sequence $\{w_n\}$ in $\mathcal{A}$ by setting $w_0 = \phi_1$ and $w_n = T(w_{n-1})$, for all $n\geq 1$. Then we have
\begin{equation}\label{monotone_iteration}
    \begin{aligned}
        -\p w_n + \ps w_n -\la \frac{f(0)}{w_n^{\ba}} &= \ftil (w_{n-1}) \text{ in } \Om\\
        w_n >0 \text{ in } \Om,\ w_n &=0 \text{ in } \bdry
    \end{aligned}
\end{equation}
and by the weak comparison principle,
\[ \phi_1 = w_0 \leq w_1 \leq w_2 \leq \dots \leq w_{n-1} \leq w_n \leq \dots \leq \psi_1 \text{ in } \Om.\]
Hence the pointwise limit of $\{w_n\}$ exists and let $z:= \lim_{n\rar\infty} w_n$.

Since $w_n \leq \psi_1$, $\forall\ n\in\sn$ and $\psi_1\in L^{\infty}(\Om)$, $\{\ftil (w_n)\}_{n\geq 0}$ is bounded in $\sr$. Consequently, $\{w_n\}$ is bounded in $\wop$. So upto a subsequence, $w_n \rightharpoonup z$ in $\wop$. Now fix any test function $\varphi\in \test$ and choose an open subset $\Om_0$ of $\Om$ such that $supp\ \varphi \Subset \Om_0 \Subset \Om$. Then by the interior regularity result (Theorem 5) of \cite{de2024gradient}, $\{ w_n\}$ is bounded in $C^{1,\alpha} (\overline{\Om}_0)$ and due to the compact embedding, $w_n \rar z$ in $C^1 (\overline{\Om}_0)$, upto a subsequence. Taking limit in \eqref{monotone_iteration} as $n\rar\infty$, we get
\begin{equation*}
    -\p z + \ps z -\la \frac{f(0)}{z^{\ba}} = \ftil (z) \text{ in } \Om.
\end{equation*}
Thus $z$ is a weak solution of \eqref{P_lambda}. Moreover, as $z \leq \psi_1$ a.e. in $\Om$, $z\in L^{\infty} (\Om)$. Therefore, for every $\la>0$, \eqref{P_lambda} admits a positive solution $z$ in $L^{\infty} (\Om)$.

(b) Since when $\la\in (\la_* , \la^*)$, we have two pairs $(\phi_1, \psi_1)$ and $(\phi_2, \psi_2)$ of sub- and supersolutions satisfying $\phi_1\leq \phi_2 \leq \psi_1$, $\phi_1\leq \psi_2 \leq \psi_1$ and $\phi_2 \not\leq\psi_2$ in $\Om$, we consider the following two admissible sets:
\begin{equation*}
    \begin{aligned}
        \mathcal{A}_1 := \left\{ u\in \wop : u \in L^{\infty} (\Om),\ \phi_1\leq u \leq \psi_2 \text{ a.e. in } \Om \right\},\\
        \mathcal{A}_2 := \left\{ u\in \wop : u \in L^{\infty} (\Om),\ \phi_2\leq u \leq \psi_1 \text{ a.e. in } \Om \right\}.
    \end{aligned}
\end{equation*}
Repeating the arguments of part (a) in each of $\mathcal{A}_1$ and $\mathcal{A}_2$, we obtain two solutions $z_1 \in \mathcal{A}_1$ and $z_2\in \mathcal{A}_2$. Also because of the condition $\phi_2 \not\leq\psi_2$ in $\Om$, the sets $\mathcal{A}_1$ and $\mathcal{A}_2$ are disjoint and so $z_1$ and $z_2$ are also distinct. Therefore, for $\la\in (\la_* , \la^*)$, \eqref{P_lambda} admits two positive solutions in $L^{\infty} (\Om)$.\hfill\qed

\begin{remark}\label{regularity}
    In the above proof, the solutions $z_1, z_2$ are obtained in $L^{\infty} (\Om)$. However, in light of the recent global regularity results of \cite{antonini2025global} and \cite{dhanya2024interiorboundaryregularitymixed}, \cite{bal2025regularityresultsclassmixed}, these solutions actually belong to $C^{1,\alpha} \omc$, for some $\alpha\in (0,1)$.
\end{remark}


\section{A Third Solution in The Non-singular Case}\label{section-third solution}

In this section, we further extend the previous result and establish the existence of a third solution for \eqref{P_lambda} in the specific case of $\ba =0$, by employing a fixed point theorem due to Amann.
\begin{proposition}{(see Amann \cite{amann1976fixed})}\label{amann}
    Let $X$ be a retract of some Banach space and $f: X \rar X$ be a completely continuous map. Suppose $X_1$ and $X_2$ are disjoint retracts of $X$ and let $U_k$, $k=1,2$, be open subsets of $X$ such that $U_k \subset X_k$, $k=1,2$. Moreover, suppose that $f(X_k)\subset X_k$ and that $f$ has no fixed points on $X_k \setminus U_k$, $k=1,2$. Then $f$ has at least three distinct fixed points $x, x_1, x_2$ with $x_k \in X_k$, $k=1,2$ and $x\in X \setminus (X_1 \cup X_2)$.
\end{proposition}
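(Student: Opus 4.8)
The plan is to derive this from the theory of the fixed point index for completely continuous maps on retracts of Banach spaces. Every retract of a Banach space is an absolute (neighbourhood) retract, so the Leray--Schauder fixed point index $i(g,U,Y)$ is available for a completely continuous self-map $g$ of a retract $Y$ relative to a set $U$ open in $Y$, whenever $g$ has no fixed point on $\partial_Y U$; I will use its normalization, its excision and additivity, its permanence under restriction to a sub-retract, and its solution (nonvanishing) property. Two preliminary observations are needed: each $X_k$ is a retract of $X$, hence of the ambient Banach space, hence itself an absolute retract; and $X_1,X_2$, being retracts of a Hausdorff space, are closed in $X$, so the $X$-closures satisfy $\overline{U_k}\subset X_k$.

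First I would record the normalizations. Since $f$ is completely continuous, $\overline{f(X)}$ is compact, hence bounded, so all fixed points of $f$ lie in a bounded open neighbourhood of $\overline{f(X)}$ in $X$; excision followed by the normalization property then gives $i(f,X,X)=1$ (this is the one place complete continuity is genuinely used, to localize to a bounded region). The same argument applied to $f|_{X_k}\colon X_k\to X_k$, which is legitimate since $f(X_k)\subset X_k$, gives $i(f|_{X_k},X_k,X_k)=1$ for $k=1,2$. Next I would compute the index over $U_k$: the hypothesis that $f$ has no fixed point on $X_k\setminus U_k$ implies in particular that it has none on $\overline{U_k}\setminus U_k$, so $i(f|_{X_k},U_k,X_k)$ is defined and, by excision within $X_k$, equals $i(f|_{X_k},X_k,X_k)=1$; since $f(\overline{U_k})\subset f(X_k)\subset X_k$ and $X_k$ is a retract of $X$, the permanence property transfers this to $i(f,U_k,X)=i(f|_{X_k},U_k,X_k)=1$.

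Then I would set $U_0:=X\setminus(\overline{U_1}\cup\overline{U_2})$, which is open, and note that $U_0,U_1,U_2$ are pairwise disjoint because $\overline{U_1}\subset X_1$ and $\overline{U_2}\subset X_2$ are disjoint. Every fixed point of $f$ lies in $U_0\cup U_1\cup U_2$: a fixed point lying in $\overline{U_k}\subset X_k$ is forced into $U_k$ by the no-fixed-point hypothesis, and any other fixed point lies in $U_0$. Moreover $f$ has no fixed point on $\partial U_0\cup\partial U_1\cup\partial U_2$, since $\partial U_k\subset X_k\setminus U_k$ and $\partial U_0\subset(\overline{U_1}\setminus U_1)\cup(\overline{U_2}\setminus U_2)\subset(X_1\setminus U_1)\cup(X_2\setminus U_2)$. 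Additivity of the index then yields
\[
1=i(f,X,X)=i(f,U_0,X)+i(f,U_1,X)+i(f,U_2,X)=i(f,U_0,X)+2,
\]
so $i(f,U_0,X)=-1\neq 0$, and the solution property provides a fixed point $x\in U_0$; the nonzero indices $i(f,U_k,X)=1$ provide fixed points $x_k\in U_k\subset X_k$. If $x$ belonged to some $X_k$, then, being a fixed point, it would lie in $U_k\subset\overline{U_k}$, contradicting $x\in U_0$; hence $x\in X\setminus(X_1\cup X_2)$, and since $X_1$, $X_2$, $X\setminus(X_1\cup X_2)$ are pairwise disjoint, $x,x_1,x_2$ are three distinct fixed points of $f$.

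The hard part is really the first step: one needs the fixed point index for compact maps on absolute neighbourhood retracts equipped with all four structural properties, together with the reduction, via complete continuity, to a bounded situation where the normalization $i(f,\cdot,\cdot)=1$ is valid. Once that machinery is in hand, the remaining steps are bookkeeping with the index axioms, with the permanence property doing the essential work of moving information between $X$, the sub-retracts $X_k$, and the open sets $U_k$.
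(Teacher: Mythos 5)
The paper does not prove this proposition; it is quoted verbatim from Amann's survey, where it is established exactly by the fixed-point-index argument you give (normalization $i(f,X,X)=i(f|_{X_k},X_k,X_k)=1$ for compact self-maps of retracts, permanence to pass between $X$ and the sub-retracts $X_k$, excision, and additivity forcing $i(f,U_0,X)=-1$). Your proof is correct and is essentially the original one, including the needed observations that retracts are closed so $\overline{U_k}\subset X_k$, and that a fixed point in $X_k$ must lie in $U_k$, which is what pushes the third fixed point out of $X_1\cup X_2$.
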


We recall the solution map $T$ defined in Section \ref{section-multiplicity}. The primary focus is now to establish certain key properties of the map $T$, which will act as sufficient conditions for verifying the hypotheses of Proposition \ref{amann}. A crucial component of this process is the Hopf-type Strong Comparison Principle, which we introduce first before proceeding with the proof of Theorem \ref{theorem_2}.

\begin{proposition}\label{nonsingular_SCP}
    Let $g\in C_0 (\sr)\cap BV_{loc}(\sr) $, $u,v \in \wop\cap C_0 \omc$, $u \not\equiv v$ and there exists $K>0$ such that
    \begin{equation*}
        \begin{aligned}
            -\p u + \ps u +g(u) &\leq -\p v + \ps v +g(v) \leq K \text{ in } \Om\\
            0< u &\leq v \text{ in } \Om.
        \end{aligned}
    \end{equation*}
    Then $u<v$ in $\Om$ and $v-u \in C_e^+ \omc$, where $e$ is defined by equation \eqref{e}.
\end{proposition}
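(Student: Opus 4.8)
The plan is to prove Proposition \ref{nonsingular_SCP} in two stages: first a weak comparison giving $u \le v$ is already assumed, so the real work is (i) upgrading $u \le v$ to strict inequality $u < v$ in the interior, and (ii) obtaining the boundary behaviour $v - u \ge c\, e$ near $\partial\Omega$, i.e. $v - u \in C_e^+(\overline{\Omega})$. Set $w := v - u \ge 0$, $w \not\equiv 0$. The first step is to absorb the nonlinear zeroth-order term: since $g \in C_0(\mathbb{R}) \cap BV_{loc}(\mathbb{R})$ and $0 < u \le v$ with $u, v$ bounded, the quantity $g(v) - g(u)$ can be written as $-c(x)\, w$ for some bounded (not necessarily signed) function $c$; more carefully, because we only have $g \in BV_{loc}$ one argues via $g(v)-g(u) = \big(g(v) + Lv\big) - \big(g(u)+Lu\big) - L w$ after choosing $L$ large enough that $t \mapsto g(t)+Lt$ is nondecreasing on the relevant compact range of values — then $g(v)-g(u) \ge -Lw$, so the differential inequality becomes
\begin{equation*}
    -\Delta_p v + (-\Delta_p)^s v - \big(-\Delta_p u + (-\Delta_p)^s u\big) \le L w \quad \text{in } \Omega.
\end{equation*}

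The second step is the core local argument. I would work on an arbitrary ball $B \Subset \Omega$ and use the strong comparison / Harnack machinery for the mixed operator. On $B$, $w \ge 0$ solves a differential inequality of the form ``$\mathcal{L}(v) - \mathcal{L}(u) \le Lw$'' where $\mathcal{L} = -\Delta_p + (-\Delta_p)^s$; pairing this with the known inequality $\mathcal{L}(v) \le K$ and using the nonlocal positivity of the tail term (the fractional $p$-Laplacian contribution $(-\Delta_p)^s v(x) - (-\Delta_p)^s u(x)$ at a point where $w$ is small sees the strictly positive mass of $w$ elsewhere in $\Omega$), one shows that $w$ cannot vanish at an interior point without vanishing identically. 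Concretely: if $w(x_0) = 0$ for some $x_0 \in \Omega$, then $x_0$ is an interior minimum of $w$; the local part contributes nonnegatively in a viscosity/weak sense, while the nonlocal part gives
\begin{equation*}
    (-\Delta_p)^s v(x_0) - (-\Delta_p)^s u(x_0) \le -c \int_{\Omega} \frac{|w(y)|^{p-1}}{|x_0 - y|^{N+ps}}\, dy < 0
\end{equation*}
unless $w \equiv 0$, because $u(x_0) = v(x_0)$ makes the integrand comparison collapse to a one-signed expression in $w(y)$. Combined with $L w(x_0) = 0$ this forces $w \equiv 0$, contradicting $u \not\equiv v$. Hence $u < v$ throughout $\Omega$. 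Making this rigorous at the level of weak solutions (rather than pointwise) requires either the De Giorgi–Nash–Moser interior Harnack inequality for the mixed operator — available from the regularity references cited in the paper — applied to $w$ on compact subsets, or a careful test-function argument with $w^{-1}$-type test functions on sublevel sets; I would cite the interior regularity (Theorem 5 of \cite{de2024gradient}) to know $u, v \in C^{1,\alpha}_{loc}$, which lets me run the pointwise argument legitimately.

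The third step is the boundary estimate $w \in C_e^+(\overline{\Omega})$, i.e. $\inf_{\Omega} w/e > 0$. Here I would build a local barrier near $\partial\Omega$: using that $e$ solves $\mathcal{L} e = 1$ with $\partial e/\partial \nu < 0$ on $\partial\Omega$ (so $e \sim d$), and that $w$ satisfies $\mathcal{L} v - \mathcal{L} u \le Lw$ with $w > 0$ in the interior (from Step 2) hence $w \ge \delta > 0$ on any compact subset, one compares $w$ with $\varepsilon e$ on a boundary neighbourhood: by homogeneity $\mathcal{L}(\varepsilon e) = \varepsilon^{p-1}$, and the positivity picked up in Step 2 plus the nonlocal tail of $w$ dominate this for $\varepsilon$ small, so the weak comparison principle on the boundary collar yields $w \ge \varepsilon e$ there; away from the boundary $w/e$ is bounded below by compactness. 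Together with $w \le C e$ (immediate from $w \in C_0(\overline{\Omega})$, $e \sim d$, and $w/d$ bounded by the $C^1$ regularity up to the boundary), this gives $w \in C_e^+(\overline{\Omega})$.

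The main obstacle I anticipate is Step 2 in the quasilinear ($p \ne 2$) setting: unlike the linear case, the difference $\mathcal{L}(v) - \mathcal{L}(u)$ is not itself an operator applied to $w$, the local $p$-Laplacian is degenerate/singular where $\nabla u$ or $\nabla v$ vanishes, and the standard strong-maximum-principle machinery does not apply verbatim. I expect to handle this by linearizing along the segment $tu + (1-t)v$ to write the difference as a (possibly degenerate) linear elliptic operator in divergence form acting on $w$ with nonlocal perturbation, then invoke the available Harnack/strong-minimum-principle results for such mixed operators from the cited regularity literature; the nonlocal term is actually an advantage here, since its strict positivity contribution (as displayed above) is what ultimately drives the strict inequality even at points where the local part degenerates. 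Verifying that $g \in C_0 \cap BV_{loc}$ — rather than $C^1$ — suffices is a minor technical point resolved by the monotone-shift trick already indicated.
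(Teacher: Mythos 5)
The paper itself does not write out a proof of Proposition \ref{nonsingular_SCP}: it is obtained by adapting Theorem 2.7 of \cite{iannizzotto2023logistic}, where the analogous strong comparison principle is proved for the fractional $p$-Laplacian. Your overall architecture --- strict interior comparison driven by the strictly negative sign of the nonlocal difference at a contact point, followed by a boundary barrier comparison against $\varepsilon e$ --- is the right one and is in the same spirit as that reference. But two of your steps would fail or are not actually carried out as written.

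First, the monotone-shift reduction is invalid under the stated hypothesis on $g$. A function $g\in C_0(\sr)\cap BV_{loc}(\sr)$ need not satisfy $g(v)-g(u)\ge -L(v-u)$ for any finite $L$: a continuous locally $BV$ function behaving like $-\sqrt{|t-t_0|}$ near some $t_0$ has right derivative $-\infty$ there, so $t\mapsto g(t)+Lt$ is not nondecreasing on any neighbourhood of $t_0$. The way $BV_{loc}$ is actually exploited is via the Jordan decomposition $g=g_1-g_2$ into continuous nondecreasing parts: since $u\le v$, the piece $g_2(v)-g_2(u)\ge 0$ enters the differential inequality with a favourable sign and can be discarded, while at a contact point $u(x_0)=v(x_0)$ the $g$-terms cancel exactly; no one-sided Lipschitz bound is available or needed. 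Second, the interior step for $p\ne 2$ is precisely the hard point, and you only announce a plan for it: at an interior contact point one may have $\na u(x_0)=\na v(x_0)=0$, where the linearization of $\p$ along the segment degenerates (for $p>2$) or is singular (for $p<2$), so the assertion that the local part contributes with the correct sign at an interior minimum is exactly what is not justified. The rescue is indeed the strict negativity of the nonlocal difference, but turning that into a proof for weak solutions requires either a viscosity framework with interior $C^1$ regularity and a careful treatment of the principal value, or the clopen/barrier propagation argument of \cite{iannizzotto2023logistic}; neither is executed. Two smaller points: the quantitative bound you display for the nonlocal difference, with $|w(y)|^{p-1}$ in the integrand, is valid only for $p\ge 2$ (for $1<p<2$ one must use the $L^\infty$ bounds on $u,v$ to keep a strictly negative contribution), and the upper estimate $w\le Ce$ requires global $C^{1}$ regularity of $u$ and $v$, hence a two-sided $L^\infty$ bound on $-\p u+\ps u$ and $-\p v+\ps v$ rather than only the upper bound $K$ appearing in the hypothesis; this holds in the paper's application but should be made explicit.
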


The proof of the above Proposition follows almost verbatim from that of Theorem 2.7 of \cite{iannizzotto2023logistic}, where an analogous result is established for the fractional $p$-Laplacian. Hence, we omit the details here.

With all the required tools now in place, we move on to the proof of Theorem \ref{theorem_2}.

\noindent\textbf{Proof of Theorem \ref{theorem_2}:} We show that for every $\la\in (\la_* , \la^*)$, \eqref{P_lambda} admits three distinct positive solutions in $C^{1,\alpha} \omc$. So we fix some $\la\in (\la_* , \la^*)$ and in view of remark \ref{regularity} we define
\begin{equation*}
    \begin{aligned}
        X:= \left\{u\in C_0 \omc : \phi_1 \leq u \leq \psi_1 \text{ in } \Om \right\},\\
        X_1:= \left\{u\in C_0 \omc : \phi_1 \leq u \leq \psi_2 \text{ in } \Om \right\},\\
        X_2:= \left\{u\in C_0 \omc : \phi_2 \leq u \leq \psi_1 \text{ in } \Om \right\}.
    \end{aligned}
\end{equation*}
Then we have the following observations:
\begin{enumerate}
    \item Each of $X$, $X_1$ and $X_2$ is a non-empty, closed and convex subset of the Banach space $C_e \omc$, the function $e$ being defined in \eqref{e}.

    \item Since $\phi_2 \not\leq \psi_2$ in $\Om$, the sets $X_1$ and $X_2$ are disjoint.

    \item By the weak comparison principle, we have $T(X) \subset X$, $T(X_1) \subset X_1$ and $T(X_2) \subset X_2$.
\end{enumerate}
Now, following the approach used in the proof of part (a), Theorem \ref{theorem_1}, we employ the monotone iteration method in $X_1$ and $X_2$ starting from $\psi_2$ and $\phi_2$ respectively. This process yields two positive solutions $z_1 \in X_1$ and $z_2 \in X_2$ for \eqref{P_lambda}. Moreover, these specific choices of the initial terms ensure that $z_1$ is the maximal positive solution in $X_1$ and $z_2$ is the minimal positive solution in $X_2$. Now, to obtain a third solution, we invoke Proposition \ref{amann} and proceed to verify its hypotheses.

\noindent\textbf{Claim-1:} The map $T: X \rar X$ is completely continuous.

Consider a sequence $u_n \rar u$ in $X$ as $n\rar\infty$. Let $w=T(u)$ and $w_n = T(u_n),\ n\in\sn$, that is, $w_n$ satisfies
\begin{equation}\label{nonsingular_continuity}
    \begin{aligned}
        -\p w_n + \ps w_n = \la f(u_n) \text{ in } \Om\\
        w_n >0 \text{ in } \Om,\ w_n = 0 \text{ in } \bdry.
    \end{aligned}
\end{equation}
Since $f$ is continuous, there exists a constant $C>0$ such that $\la f(u_n) \leq C$, $\forall\ n\in\sn$. Consequently, by the global regularity result (Theorem 1.1) of \cite{antonini2025global}, the sequence $\{ w_n\}$ is bounded in $C^{1,\alpha} \omc$ for some $\alpha \in (0,1)$. By the compact embedding, upto a subsequence, $\{ w_n\}$ converges in $C^{1,\alpha'} \omc$ for some $0<\alpha'<\alpha$. Therefore, passing to the limit in \eqref{nonsingular_continuity} as $n\rar\infty$ and using the uniqueness of solutions, we obtain $w_n \rar w$ as $n\rar\infty$. In fact, every subsequence of $\{w_n\}$ has a further subsequence converging to $w$ in $C^{1,\alpha'} \omc$ as $n\rar\infty$. Consequently, the map $T: X \rar X$ is continuous.

To prove the compactness, let us take a bounded sequence $\{u_n\}$ in $X$ and let $T(u_n) = w_n,\ n\in\sn$. Then a similar argument yields that $T: X \rar C^{1,\alpha'}_0 \omc$ is compact. Since $C^{1,\alpha'}_0 \omc \hookrightarrow C_0 \omc$ and $T(X) \subset X$, it follows that the map $T: X \rar X$ is completely continuous.

Next, we make the following claim and proving this will confirm that all hypotheses of Proposition \ref{amann} are satisfied.

\noindent\textbf{Claim-2:} There exist open subsets $U_1, U_2$ of $X$ such that $U_1 \subset X_1$, $U_2 \subset X_2$ and $T$ has no fixed points in $(X_1 \setminus U_1) \cup (X_2 \setminus U_2)$.

Consider the set $X_1$. From equation \eqref{sub_soln_1} and part (b) of Theorem \ref{theorem_1} recall that the functions $\phi_1$ and $z_1$ satisfy the following equations:
\begin{equation*}
    \begin{aligned}
        -\p \phi_1 + \ps \phi_1 &< \la f(\phi_1) \text{ in } \Om,\\
        -\p z_1 + \ps z_1 &= \la f(z_1) \text{ in } \Om.
    \end{aligned}
\end{equation*}
Therefore, due to the monotonicity of $f$, we have
\[-\p \phi_1 + \ps \phi_1 \leq -\p z_1 + \ps z_1 \leq K \text{ in } \Om\]
where $K= \la f(\|\psi_2 \|_{\infty})$. Then, by Proposition \ref{nonsingular_SCP}, we obtain $z_1 - \phi_1 \in C_e^+ \omc$. Similarly, we get $\psi_2 - z_1 \in C_e^+ \omc$. Now, since $z_1$ is the maximal positive solution of \eqref{P_lambda} in $X_1$, we define $U_1$ to be the largest open set in $X_1$ containing $z_1$ such that $X_1 \setminus U_1$ contains no fixed point of the map $T$. Similarly, we define $U_2$ to be the largest open set in $X_2$ containing $z_2$ such that $X_2 \setminus U_2$ contains no fixed point of the map $T$. This proves our claim.

Thus, by Proposition \ref{amann}, $T$ has another fixed point $z_3 \in X \setminus (X_1 \cup X_2) $. Consequently, the problem \eqref{P_lambda} has three distinct positive solutions $z_1 \in X_1$, $z_2 \in X_2$ and $z_3 \in X \setminus (X_1 \cup X_2) $ whenever $\la\in (\la_* , \la^*)$.\hfill\qed


\section{A Third Solution in The Singular Case for The Linear Operator}\label{section-third solution- linear singular case}

In this case, \eqref{P_lambda} simplifies to the following form:
\begin{equation}\label{P_lambda_linear}
    \begin{aligned}
        -\Delta u + (-\De)^s u = \la \frac{f(u)}{u^{\ba}} \text{ in } \Om\\
        u >0 \text{ in } \Om,\ u =0 \text{ in } \bdry
    \end{aligned}
\end{equation}
where $0<\ba<1$. While the overall idea remains similar to that employed in the previous section, the inclusion of the singular term introduces additional challenges requiring more refined analysis. From the proof of Theorem \ref{theorem_2}, it is evident that the main difficulty lies in proving the strongly increasing nature of the map $T$, that is, if $u_1 \leq u_2$ with $u_1 \not\equiv u_2$ in $\Om$, then $T(u_2)- T(u_1) \in\ C_e^+ (\overline{\Om})$. This stronger condition, beyond mere monotonicity, is essential for verifying the hypotheses of Amann's fixed point theorem, specifically Claim-2 in the proof of Theorem \ref{theorem_2}.

In \cite{giacomoni2019existence}, the authors successfully handled the singular case ($0<\beta<1$) for the fractional Laplacian by analyzing an infinite semipositone problem. However, even in this linear case, substantial analysis was required to obtain a positive solution, behaving like the function $d^s(x)$, for the corresponding infinite semipositone problem. Next, weak comparison principle is applied to show that this solution acts as a lower bound for $T(u_2) - T(u_1)$ in $\Omega$ and consequently the map $T$ becomes strongly increasing.

Here, we provide a different approach to derive a Strong Comparison Principle for singular problems involving the mixed local-nonlocal Laplacian (Theorem \ref{Hopf lemma_linear case}). The proof proceeds by first obtaining a strict comparison between the functions themselves, and subsequently between their normal derivatives on the boundary of $\Om$. We note the contribution of \cite{dhanya2024interiorboundaryregularitymixed} where a Strong Comparison Principle (Theorem 2.8) is established for mixed operators, under the restriction of $s\in \left(0, \frac{1}{2}\right)$ in the linear case. Here, thanks to the linearity of the operator, we employ a weak Harnack-type inequality that enables us to derive the desired result for the full range $s\in (0,1)$. The final step consists of proving the Hopf-type Strong Comparison Principle, where the key idea is to construct a function $v$ that mimics the behavior of $d(x)$ near $\pa\Om$ and satisfies $T(u_2) - T(u_1) \geq Cv(x)$ in $\Om$ for some $C>0$.

\begin{lemma}\label{strict comparison}
    Let $u_1, u_2 \in C^{1,\alpha}(\overline{\Om})$, for some $\alpha\in(0,1)$, $u_1, u_2 > 0$ in $\Om$ and they satisfy \eqref{scp_equns} with $f_1, f_2$ as in Theorem \ref{Hopf lemma_linear case}. Then $u_1<u_2$ in $\Om$.
\end{lemma}

\begin{proof}
    Define $w:= u_2 - u_1$. Then $w \geq 0$ in $\sr^N$ and by the mean value theorem, there exists a function $\xi \in [u_1, u_2]$ such that $w$ satisfies the following:
    \begin{equation*}
        -\Delta w + (-\De)^s w + \frac{\beta}{\xi (x) ^{\ba +1}} w = f_2 - f_1 \geq 0 \text{ in } \Om.
    \end{equation*}
    Since $u_1, u_2$ satisfy \eqref{scp_equns}, it follows that $u_1, u_2 \sim d(x)$ in $\Om$. Hence there exists $k>0$ such that
    \begin{equation}\label{w_equn}
        -\Delta w + (-\De)^s w + \frac{k}{d(x)^{\ba +1}} w \geq 0 \text{ in } \Om.
    \end{equation}
    Suppose, towards a contradiction, that there exists a point $x_0 \in \Om$ with $w(x_0) =0$. Fix $0<r<1$ such that $B_{2r} (x_0) \Subset \Om$. Then, for some constant $k_1 >0$, depending on $x_0$ and $r$, we have
    \begin{equation*}
        -\Delta w + (-\De)^s w + k_1 w \geq 0 \text{ in } B_{2r} (x_0)
    \end{equation*}
    in the sense of definition 3.1 of \cite{biagi2024multiplicity}. Since $w \geq 0 \text{ in } \sr^N$, the weak Harnack-type inequality (Proposition 3.3, \cite{biagi2024multiplicity}) yields positive constants $k_2$, $k_3$ such that
    \begin{equation*}
        \left( \dashint_{B_r (x_0)} w^{k_2}\ dx \right)^{1/k_2} \leq k_3\ ess\ inf_{B_r (x_0)} w.
    \end{equation*}
    Consequently, $w=0$ in $B_r (x_0)$.

    To reach a contradiction, we test the equations in \eqref{scp_equns} with a non-negative $\varphi \in \test$, $\varphi \not\equiv 0$, with $supp\ \varphi \Subset B_r (x_0)$ and subtract. This yields
    \begin{multline*}
        \int_{\Om} \na w \cdot \na \varphi\ dx + \int_{\sr^N \times \sr^N} \frac{\left(w(x) - w(y) \right) \left(\varphi(x) - \varphi(y) \right)}{|x-y|^{N+2s}}\ dx\ dy\\
        - \int_{\Om} \left( \frac{1}{u_2^{\beta}} - \frac{1}{u_1^{\beta}} \right) \varphi\ dx = \int_{\Om} \left( f_2-f_1 \right) \varphi\ dx.
    \end{multline*}
    Due to the choice of $\varphi$, the first and third terms on the left-hand side vanish. Moreover, since neither $w$ nor $\varphi$ are identically zero in $\Om$ and $f_2 \geq f_1$, we arrive at
    \begin{equation*}
        0\leq \int_{\Om} \left( f_2-f_1 \right) \varphi\ dx = \int_{\sr^N \times \sr^N} \frac{\left(w(x) - w(y) \right) \left(\varphi(x) - \varphi(y) \right)}{|x-y|^{N+2s}}\ dx\ dy < 0,
    \end{equation*}
    which is not possible. Therefore, $w>0$ in $\Om$, or equivalently, $u_1 < u_2$ in $\Om$.
\end{proof}

Next, we present an important observation concerning the $C^{1,\alpha}$-extension of a suitable function to the entire $\sr^N$. This extension serves as a key component in proving Theorem \ref{Hopf lemma_linear case} (Hopf-type SCP).

We use the following notations: for any $x\in \sr^N$ we write $x= (x_1, x_2, \dots,$ $ x_N) = (x', x_N)$ and define $\sr^N_+ = \left\{x\in \sr^N : x_N >0\right\}$ and $\sr^N_- = \left\{ x\in \sr^N : x_N <0\right\}$. For any $r>0$ and $x\in \sr^N$, $B_r (x)$ denotes the ball of radius $r$ centered at $x$ with $B_r^+ (x) = B_r (x) \cap \sr^N_+ $ and $B_r^- (x) = B_r (x) \cap \sr^N_-$.

\begin{lemma}\label{extension of e}
    Let $e_0 \in C^{1,\alpha} \omc$, $\alpha \in (0,1)$, be the unique solution of
    \begin{equation}\label{e_0 equation}
        \begin{aligned}
            -\Delta e_0 &=1 \text{ in } \Om\\
            e_0 &= 0 \text{ on } \pa\Om.
        \end{aligned}
    \end{equation}
    Then there exists an extension $\tilde{e}_0$ of $e_0$ such that $\tilde{e}_0 \in C^{1,\alpha} (\sr^N)$, $\tilde{e}_0 = e_0$ in $\overline{\Om}$ and $\tilde{e}_0 \leq 0$ in $\sr^N \setminus \Om$.
\end{lemma}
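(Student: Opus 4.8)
The plan is to construct the extension $\tilde e_0$ explicitly near $\partial\Omega$ using a localization and boundary-flattening argument, and then patch it together with a global cutoff. Since $\partial\Omega$ is smooth and $e_0\in C^{1,\alpha}(\overline\Omega)$, the first step is to fix, for each boundary point $p\in\partial\Omega$, a neighborhood $U_p$ and a $C^{1,\alpha}$ (in fact smooth) diffeomorphism $\Phi_p$ that straightens the boundary, mapping $U_p\cap\Omega$ into $\sr^N_+$ and $U_p\cap\partial\Omega$ into $\{x_N=0\}$. In these straightened coordinates the transported function $\hat e_0 := e_0\circ\Phi_p^{-1}$ belongs to $C^{1,\alpha}(\overline{B_r^+(0)})$ for small $r$ and vanishes on $\{x_N=0\}$. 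The classical reflection that preserves $C^{1,\alpha}$-regularity across a flat boundary for a function vanishing there is the \emph{odd} (Whitney-type) reflection; here, because we additionally want $\tilde e_0\le 0$ outside $\Omega$, I would instead use a \emph{higher-order reflection of Hestenes type}, namely set $\hat e_0(x',x_N):= a_1\hat e_0(x',-x_N)+a_2\hat e_0(x',-x_N/2)$ for $x_N<0$ with constants $a_1,a_2$ chosen so that the value and the $x_N$-derivative match at $x_N=0$ (the system $a_1+a_2=1$, $-a_1-a_2/2=1$, i.e. matching $\hat e_0$ and $\partial_{x_N}\hat e_0$). One checks this reflected function lies in $C^{1,\alpha}(B_r(0))$; then since $\hat e_0\ge 0$ on $B_r^+$ with $\hat e_0=0$ and $\partial_{x_N}\hat e_0<0$ (Hopf via \eqref{e_0 equation}... wait, here $\partial_{x_N}\hat e_0>0$ as we move into $\sr^N_+$) on the boundary, for $r$ small enough $\hat e_0(x',-x_N)$ and $\hat e_0(x',-x_N/2)$ are $\ge 0$, and choosing the sign conventions so that $a_1,a_2$ work out to make the combination $\le 0$ for $x_N<0$ near $0$. (If the Hestenes coefficients do not automatically give the sign, one shrinks $r$ and subtracts a small multiple of $|x_N|^{1+\alpha}$-type corrector, or simply replaces the reflection by $-$(odd reflection), which is $\le 0$ below and still $C^{1,\alpha}$ since $e_0$ vanishes to first order.)

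Transporting back via $\Phi_p$ gives a local $C^{1,\alpha}$ extension $e_0^{(p)}$ defined on $U_p$ with $e_0^{(p)}=e_0$ on $U_p\cap\overline\Omega$ and $e_0^{(p)}\le 0$ on $U_p\setminus\Omega$ (after shrinking $U_p$). By compactness of $\partial\Omega$, extract a finite subcover $U_{p_1},\dots,U_{p_k}$; together with $U_0:=\Omega$ (on which we keep $e_0$ itself) this covers a neighborhood $W$ of $\overline\Omega$. Take a smooth partition of unity $\{\eta_j\}_{j=0}^{k}$ subordinate to $\{U_{p_j}\}$ on $W$ and set, on $W$, $\tilde e_0 := \eta_0 e_0 + \sum_{j=1}^k \eta_j e_0^{(p_j)}$; on $\sr^N\setminus W$ define $\tilde e_0\equiv 0$, after first multiplying the whole thing by a global cutoff $\zeta\in C_c^\infty(W)$ with $\zeta\equiv 1$ on a slightly smaller neighborhood of $\overline\Omega$ — this matters because $\zeta\equiv 1$ near $\overline\Omega$ ensures we have not altered $\tilde e_0$ on $\overline\Omega$, while $\zeta$ vanishing makes the extension-by-zero across $\partial W$ of class $C^{1,\alpha}$. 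On $\overline\Omega$ each $e_0^{(p_j)}$ equals $e_0$, so $\sum_j\eta_j e_0^{(p_j)}=(\sum_j\eta_j)e_0=e_0$, giving $\tilde e_0=e_0$ there. On $\sr^N\setminus\Omega$ (inside $W$), each term $\eta_j e_0^{(p_j)}$ with $j\ge1$ is $\le 0$ (and $\eta_0$ is supported in $\Omega$, hence contributes $0$ there), so $\tilde e_0\le 0$; outside $W$ it is identically $0\le 0$. Finiteness of the sum and smoothness of the $\eta_j$ and $\zeta$ preserve $C^{1,\alpha}$, so $\tilde e_0\in C^{1,\alpha}(\sr^N)$, as required.

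The main obstacle is ensuring \emph{simultaneously} the $C^{1,\alpha}$-regularity across $\partial\Omega$ and the sign condition $\tilde e_0\le 0$ outside $\Omega$: a naive even or constant extension would break $C^1$-matching of the normal derivative (which is strictly negative inward by Hopf's lemma for \eqref{e_0 equation}), while a plain odd reflection gives $C^{1,\alpha}$ but is $\ge 0$ on one side. The resolution is precisely the choice of reflection coefficients together with the freedom to shrink the boundary neighborhoods so the relevant terms have a definite sign; in the worst case one uses the explicit model $\tilde e_0 = -(\text{odd reflection of }e_0)$ locally, which is manifestly $C^{1,\alpha}$ and nonpositive below the flattened boundary, and the partition-of-unity patching does the rest. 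I would also remark that one only needs $\tilde e_0\le 0$ and $C^{1,\alpha}$; no equation is imposed on $\tilde e_0$ in $\sr^N\setminus\Omega$, which is what gives enough flexibility for the construction.
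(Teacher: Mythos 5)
Your overall architecture---straighten the boundary locally with a diffeomorphism, reflect across the flattened boundary, patch with a partition of unity subordinate to a finite cover of $\partial\Omega$ together with $U_0=\Omega$, and use that no equation is imposed on the extension outside $\Omega$---is exactly the paper's. The trouble is in the one step that actually matters, the choice of reflection, where your write-up contains a sign confusion that steers you away from the correct (and simplest) formula. The plain odd reflection
\[
\hat e_0(y',y_N):=-\,\hat e_0(y',-y_N),\qquad y_N<0,
\]
already does everything at once: it is $C^{1,\alpha}$ across $\{y_N=0\}$ because $\hat e_0$ vanishes there (so the tangential derivatives vanish and match, and the chain rule gives $\partial_{y_N}\bigl[-\hat e_0(y',-y_N)\bigr]=\partial_{y_N}\hat e_0(y',-y_N)$, which matches at $y_N=0$), and it is $\le 0$ on $\{y_N<0\}$ precisely because $\hat e_0\ge 0$ on $\{y_N>0\}$. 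Your assertion that ``a plain odd reflection gives $C^{1,\alpha}$ but is $\ge 0$ on one side'' has the sign backwards, and this error pushes you first into the Hestenes two-term reflection---whose sign on $\{y_N<0\}$ is genuinely not definite (your conditions force $a_1=-3$, $a_2=4$, and $-3\hat e_0(y',-y_N)+4\hat e_0(y',-y_N/2)$ has no sign without further work; note also that the value-matching condition $a_1+a_2=1$ is vacuous here since $\hat e_0=0$ on $\{y_N=0\}$, so only the derivative condition constrains, and $a_1=-1$, $a_2=0$ satisfies it)---and then into the fallback ``$-(\text{odd reflection of }e_0)$'', which as literally written negates the extension on $\overline{\Omega}$ as well and destroys the requirement $\tilde e_0=e_0$ there. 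Replace the reflection step by the odd reflection; the rest of your argument (transport back by the diffeomorphism, finite subcover of the compact boundary, partition of unity, global cutoff) then goes through and coincides with the paper's proof.
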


\begin{proof}
    Let $x_0$ be a point on $\pa\Om$. At first, we extend the function $e_0$ in a neighborhood of the point $x_0$ by using the straightening of boundary technique. Since $\pa\Om$ is smooth, for chosen $x_0 \in \pa\Om$, there exist $r_0 >0$ and a $C^1$-diffeomorphism $\Psi_{x_0} : B_{r_0} (x_0) \rar \Psi_{x_0} \left(B_{r_0} (x_0) \right)$ such that
    \begin{equation*}
        \begin{aligned}
            \Psi_{x_0} \left(\Om \cap B_{r_0} (x_0) \right) &\subset \sr_+^N\\
            \text{and } \Psi_{x_0} \left(\pa \Om \cap B_{r_0} (x_0) \right) &\subset \left\{x\in \sr^N : x_N =0 \right\}.
        \end{aligned}
    \end{equation*}
    Without loss of generality, we assume $\Psi_{x_0} (x_0) =0$ and $B_1^+ (0) \subset$ $\Psi_{x_0} \left(B_{r_0} (x_0) \right)$.

    Define $q_{x_0} (y):= \left(e \circ \Psi_{x_0}^{-1}\right) \left(y\right)$, for $y \in B_1^+ (0)$. We extend $q_{x_0} (y)$ to a function $\tilde{q}_{x_0} (y)$ defined on the entire $B_1 (0)$ by the method of reflection. Let $y=(y_1, y_2, \dots, y_N)= (y', y_N) \in B_1 (0)$. For $y_N<0$ we define
    \[\tilde{q}_{x_0} (y', y_N) = - \tilde{q}_{x_0} (y', -y_N), \text{ for } y\in B_1^- (0).\]
    With this definition, it is now straightforward to check that $\tilde{q}_{x_0}$$\in C^{1,\alpha} (B_1 (0))$. Therefore, $\tilde{e}_{x_0} (x)= \left(\tilde{q}_{x_0} \circ \Psi_{x_0}\right) \left(x\right)$ is a $C^{1,\alpha}$-extension of $e_0$ to $\Psi_{x_0}^{-1} \left(B_1 (0)\right)$. Moreover, as $e_0 > 0$ in $\Om$, it is evident from the definition that $\tilde{e}_{x_0} \leq 0$ in $\Psi_{x_0}^{-1} \left(B_1^- (0)\right)$.

    Since $\pa\Om$ is compact, let $\left\{ U_i = \Psi_{x_i}^{-1} \left(B_1 (0)\right) : i=1,2, \dots, k \right\}$ be a finite subcover of $\pa\Om$, where $x_i \in \pa\Om$, $i=1,2, \dots, k$. Let $U_0 = \Om$. Then $\left\{ U_i \right\}_{i=0}^k$ is a cover of $\overline{\Om}$. Consider a smooth partition of unity $\left\{\xi_i\right\}_{i=0}^k$ subordinate to this cover $\left\{U_i\right\}_{i=0}^k$, that is, $\xi_i \in C_c^{\infty}(U_i)$ satisfying $0\leq \xi_i \leq 1$, $i= 0, 1, 2, \dots, k$ and $\sum_{i=0}^k \xi_i = 1$ on $\overline{\Om}$. Let $\tilde{e}_{x_i}$ be the $C^{1,\alpha}$-extension of $e_0$ to $U_i$, $i=1,2, \dots,k$ as constructed above. Define
    \[\tilde{e}_0 := \xi_0 e_0 + \sum_{i=1}^k \xi_i \tilde{e}_{x_i}. \]
    Then, clearly $\tilde{e}_0\in C^{1,\alpha} (\sr^N)$ with $\tilde{e}_0 = e_0$ in $\overline{\Om}$, there exists an open set $\Om_1$ such that $\Om \Subset \Om_1$, $\tilde{e}_0 \leq 0$ in $\Om_1 \setminus \Om$ and $\tilde{e}_0 = 0$ in $\sr^N \setminus \Om_1$. This completes the proof.    
\end{proof}

Now, we turn to the proof of the Hopf-type Strong Comparison Principle. We first treat the case $s\in \left(0, \frac{1}{2}\right)$ and then extend the reasoning to the case $s\in \left[\frac{1}{2}, 1\right)$. Though the essential structure of the proof is common to both cases, the latter range introduces subtleties and requires a more delicate estimate for the nonlocal term.

\begin{lemma}\label{hopf_first}
    Let $s \in \left(0, \frac{1}{2}\right)$, $u_1, u_2 \in C^{1,\alpha}(\overline{\Om})$, for some $\alpha\in(0,1)$, $u_1, u_2 > 0$ in $\Om$ and satisfy \eqref{scp_equns} with $f_1, f_2$ as in Theorem \ref{Hopf lemma_linear case}. Then $\frac{\pa u_2}{\pa\nu} < \frac{\pa u_1}{\pa\nu} < 0$ on $\pa\Om$.
\end{lemma}

\begin{proof}
    From the weak comparison principle and the Hopf lemma for mixed operators (Theorem 1.2, \cite{antonini2025global}), it follows that $\frac{\pa u_2}{\pa\nu} \leq \frac{\pa u_1}{\pa\nu} < 0$ on $\pa\Om$. To refine this result and obtain the strict inequality $\frac{\pa u_2}{\pa\nu} < \frac{\pa u_1}{\pa\nu}$ on $\pa\Om$, we define $w:= u_2 - u_1$. By lemma \ref{strict comparison}, $w > 0$ in $\Om$ and satisfies \eqref{w_equn}.

    Define $v(x):= \et (x) + e_0 (x)^{\gamma}$, where $\et$ is the extension given by lemma \ref{extension of e}, $e_0$ (by abuse of notation) denotes the function solving \eqref{e_0 equation} with zero extension in $\sr^N \setminus \Om$ and the exponent $\gamma\in (1,2)$ is to be chosen later. Then
    \begin{equation}\label{direct computation}
        -\Delta v + \frac{k}{d(x)^{\ba +1}} v = 1 + \gamma e_0^{\gamma -1} -\gamma (\gamma -1) e_0^{\gamma -2} |\na e_0|^2 + \frac{k}{d(x)^{\ba +1}} \left(\et + e_0^{\gamma}\right) \text{ in } \Om.
    \end{equation}
    Since $\et (x) = e_0 (x) \sim d(x)$ in $\Om$ and $\inf_{\pa\Om} \left| \frac{\pa e_0}{\pa\nu} \right| >0$, choosing $\eta>0$ small enough and $\gamma \in (1, 2- \ba)$, we obtain
    \begin{equation}\label{est_k1}
        -\Delta v + \frac{k}{d(x)^{\ba +1}} v \leq - \frac{k_1}{d(x)^{2-\gamma}} \text{ in } \Om_{\eta},
    \end{equation}
    where $\Om_{\eta} = \left\{ x\in \Om : d(x) < \eta \right\}$ and $k_1 >0$ is a constant. Moreover, since $v \in C^{1,\alpha} (\sr^N)$ and $s\in \left(0, \frac{1}{2}\right)$, by Proposition 2.6 of \cite{silvestre2007regularity}, there exists a constant $k_2 >0$ such that $\left|(-\De)^s v\right| \leq k_2$ in $\overline{\Om}_{\eta}$. Therefore, choosing $\eta$ smaller if required, we get
    \begin{equation}\label{v_equn}
        -\Delta v + (-\De)^s v + \frac{k}{d(x)^{\ba +1}} v \leq 0 \text{ in } \Om_{\eta}.
    \end{equation}
    Since by lemma \ref{strict comparison}, $w >0$ in $\Om$, we choose $\epsilon>0$ small enough so that $\epsilon v \leq w$ in $\Om \setminus \Om_{\eta}$. Then, from \eqref{w_equn} and \eqref{v_equn} we obtain
    \begin{equation}
        \begin{aligned}
            -\Delta w + (-\De)^s w + \frac{k}{d(x)^{\ba +1}} w &\geq 0 \text{ in } \Om_{\eta}\\
            -\Delta (\epsilon v) + (-\De)^s (\epsilon v) + \frac{k}{d(x)^{\ba +1}} (\epsilon v) &\leq 0 \text{ in } \Om_{\eta}\\
            w \geq \epsilon v \text{ in } \sr^N \setminus \Om_{\eta}.
        \end{aligned}
    \end{equation}
    Thus, by weak comparison principle, we have $w \geq \epsilon v = \epsilon \left(\et + e_0^{\gamma}\right)$ in $\Om_{\eta}$. Since $\et = e_0$ in $\Om$, we thus have $\frac{\pa w}{\pa\nu} \leq \frac{\pa \left(\epsilon e_0\right)}{\pa\nu} < 0$ or, equivalently $\frac{\pa u_2}{\pa\nu} < \frac{\pa u_1}{\pa\nu}$ on $\pa\Om$. This completes the proof.
\end{proof}

\begin{lemma}\label{hopf_second}
    Let $s \in \left[\frac{1}{2}, 1\right)$, $u_1, u_2 \in C^{1,\alpha}(\overline{\Om})$, for some $\alpha\in(0,1)$, $u_1, u_2 > 0$ in $\Om$ and satisfy \eqref{scp_equns} with $f_1, f_2$ as in Theorem \ref{Hopf lemma_linear case}. Then $\frac{\pa u_2}{\pa\nu} < \frac{\pa u_1}{\pa\nu} < 0$ on $\pa\Om$.
\end{lemma}

\begin{proof}
    We define the functions $w$ and $v$ as in lemma \ref{hopf_first}. Recall that they satisfy, respectively, equations \eqref{w_equn} and \eqref{est_k1} in $\Om_{\eta}$, for $\eta>0$ small and $\gamma \in \left(1, 2- \ba\right)$.

    \noindent\textbf{Claim:} For sufficiently small $\eta>0$, there exists $k_2 >0$ such that for all $x\in \Om_{\eta}$,
    \[\left|(-\De)^s v (x)\right| \leq \frac{k_2}{d(x)^{2s-1}}, \text{ for } s\in \left[\frac{1}{2},1\right).\]

    To establish this, we first estimate the Hessian of $v$. For any $i,j= 1,2, \dots,$ $N$, we have
    \begin{equation*}
        \frac{\pa^2}{\pa x_i \pa x_j} \left(e_0^{\gamma}\right) = \gamma \left( \gamma -1 \right) e_0^{\gamma-2} \frac{\pa e_0}{\pa x_i} \frac{\pa e_0}{\pa x_j} + \gamma e_0^{\gamma-1} \frac{\pa^2 e_0}{\pa x_i \pa x_j} \text{ in } \Om_{\eta}.
    \end{equation*}
    Since $1< \gamma <2$ and $e_0$ is the unique solution of equation \eqref{e_0 equation}, it follows that
    \begin{equation}\label{Hessian_estimate}
        \left|D^2v (x)\right| \leq \frac{k_3}{d(x)^{2-\gamma}} \leq \frac{k_3}{d(x)} \text{ in } \Om_{\eta}, \text{ for } \eta>0 \text{ small enough}.
    \end{equation}
    Next, for $x\in \Om_{\eta}$, we decompose the nonlocal term as
    \begin{align}\label{est_nonlocal}
        (-\De)^s v (x) &= -\frac{1}{2} \int_{|z| \leq \frac{1}{2}d(x)} \hspace{-5pt} \frac{v(x+z) + v(x-z) - 2v(x)}{|z|^{N+2s}}\ dz + \int_{|z| > \frac{1}{2}d(x)} \hspace{-5pt} \frac{v(x) - v(x-z)}{|z|^{N+2s}}\ dz\nonumber\\
        &=: I_1 + I_2 \text{ (say)}.
    \end{align}
    For $I_1$, Taylor's expansion yields $\theta_1, \theta_2 \in (0,1)$ such that
    \begin{align*}
        |I_1| &= \left|\frac{1}{4} \int_{|z| \leq \frac{1}{2}d(x)} \frac{z^T D^2v (x+ \theta_1 z) z + z^T D^2v (x- \theta_2 z) z}{|z|^{N+2s}}\ dz \right|\\
        &\leq \frac{k_3}{4} \int_{|z| \leq \frac{1}{2}d(x)} \left( \frac{1}{d(x+ \theta_1 z)} + \frac{1}{d(x- \theta_2 z)} \right) |z|^{-N-2s+2}\ dz, \text{ by \eqref{Hessian_estimate}}.
    \end{align*}
    Note that for every $x\in \Om_{\eta}$ and $|z| \leq \frac{1}{2}d(x)$, the points $x\pm \theta_i z \in \Om$ and $d(x\pm \theta_i z) \geq \frac{1}{2}d(x)$, for $i=1,2$. As a result, we obtain the following bound for $I_1$:
    \begin{equation}\label{est_1}
        |I_1| \leq \frac{k_3}{d(x)} \int_0^{\frac{1}{2}d(x)} r^{-2s+1}\ dr \leq \frac{k_4}{d(x)^{2s-1}}, \text{ for } s\in \left[\frac{1}{2},1\right).
    \end{equation}

    To estimate $I_2$, we split the argument into two cases. When $s\in \left(\frac{1}{2},1\right)$, a direct calculation gives
    \begin{equation}\label{est_2}
        |I_2| \leq \|v\|_{C^1(\sr^N)} \int_{|z|>\frac{1}{2}d(x)} |z|^{-N-2s+1}\ dz \leq \frac{k_5}{d(x)^{2s-1}}.
    \end{equation}
    In the case $s= \frac{1}{2}$, since $\Om$ is a bounded domain and thanks to the construction of $\tilde{e}_0$ as in lemma \ref{extension of e}, we can choose $R>0$ large, independent of $x\in\Om_{\eta}$, such that whenever $|z|\geq R$ we have $v(x-z)=0$. Hence, in this case $I_2$ can be estimated as follows:
    \begin{align}\label{est_3}
        |I_2| &\leq \int_{\frac{1}{2}d(x) < |z| < R} \frac{|v(x) - v(x-z)|}{|z|^{N+1}}\ dz + \int_{|z|\geq R} \frac{|v(x)|}{|z|^{N+1}}\ dz\nonumber\\
        &\leq \|v\|_{C^1 (\sr^N)} \int_{\frac{1}{2}d(x) < |z| <R} |z|^{-N}\ dz + \|v\|_{L^{\infty}} \int_{|z|\geq R} |z|^{-N-1}\ dz \leq k_6.
    \end{align}
    Substituting \eqref{est_1}, \eqref{est_2} and \eqref{est_3} in \eqref{est_nonlocal}, the claim follows.

    Combining this estimate with \eqref{est_k1}, we thus obtain
    \begin{equation}\label{v_equn_second}
        -\Delta v + (-\De)^s v + \frac{k}{d(x)^{\ba +1}} v \leq - \frac{k_1}{d(x)^{2-\gamma}} + \frac{k_2}{d(x)^{2s-1}} \text{ in } \Om_{\eta}, \text{ for } s\in \left[\frac{1}{2},1 \right).
    \end{equation}
    Now choose $\gamma \in \left(1, \text{min} \left\{ 2-\beta, 3-2s \right\} \right)$ if $s\in \left(\frac{1}{2},1 \right)$ and $\gamma \in (1, 2- \ba)$ if $s= \frac{1}{2}$ so that for any $s\in \left[ \frac{1}{2}, 1 \right)$ we have
    \begin{equation}\label{v_equn_third}
        -\Delta v + (-\De)^s v + \frac{k}{d(x)^{\ba +1}} v \leq 0 \text{ in } \Om_{\eta},
    \end{equation}
    provided $\eta>0$ is small enough. The above equation is analogous to equation \eqref{v_equn} in the proof of lemma \ref{hopf_first}. Repeating a similar argument and applying the weak comparison principle, we conclude that $\frac{\pa w}{\pa \nu}<0$, or $\frac{\pa u_2}{\pa\nu} < \frac{\pa u_1}{\pa\nu}$ on $\pa\Om$. This completes the proof in the case $s\in \left[ \frac{1}{2},1 \right)$.
\end{proof}

\noindent\textbf{Proof of Theorem \ref{Hopf lemma_linear case}:} Bringing together lemmas \ref{strict comparison}, \ref{hopf_first} and \ref{hopf_second}, we conclude the proof of the Hopf-type Strong Comparison Principle.\hfill\qed

Now, we are in a position to present the proof of Theorem \ref{theorem_3}. While the central idea mirrors that of Theorem \ref{theorem_2}, we provide a concise outline for clarity and completeness, highlighting the key differences and modifications required to address the current scenario.

\noindent\textbf{Proof of Theorem \ref{theorem_3}:} We fix $\la\in (\la_*, \la^*)$ and define the sets $X$, $X_1$ and $X_2$ analogous to the proof of Theorem \ref{theorem_2}. Let $z_1$ be the maximal positive solution of \eqref{P_lambda_linear} in $X_1$ and $z_2$ be the minimal positive solution in $X_2$, obtained by the monotone iteration method.

In view of the boundary regularity result (Theorem 2.3, \cite{dhanya2024interiorboundaryregularitymixed}) for singular problems, the proof of Claim-1, that is, $T: X \rar X$ is completely continuous, proceeds analogously. Similarly, Claim-2 follows from the direct application of the Hopf-type SCP presented in Theorem \ref{Hopf lemma_linear case}.

As a result, the conditions for Proposition \ref{amann} are satisfied and we obtain a third fixed point $z_3 \in X \setminus (X_1 \cup X_2)$ of the solution map $T$. Therefore, if $\la\in (\la_*, \la^*)$, the equation \eqref{P_lambda_linear} has three distinct positive solutions.\hfill\qed


\section{Concluding Remark}

In essence, Theorem \ref{theorem_2} and Theorem \ref{theorem_3} establish three-solution theorems in two special cases: the non-singular case ($1<p<\infty$ and $\ba =0$) and the singular case with the linear operator ($p=2$ and $0<\ba <1$) respectively. However, in the general case when $p\not= 2$ and $0<\ba<1$, we could only obtain two positive solutions. Thus, the existence of a third solution for the following equation
\begin{equation}\label{conclusion_equation}
    \begin{aligned}
        -\p u + \ps u &= \la \frac{f(u)}{u^{\ba}} \text{ in } \Om, \text{ with } p\not= 2,\ 0<\beta<1 \\
        u >0 \text{ in } \Om,\ u &=0 \text{ in } \bdry
    \end{aligned}
\end{equation}
still remains an open question. We emphasize that the main difficulty in establishing the third solution is proving the Hopf-type Strong Comparison Principle for the nonlinear operator, as Theorem \ref{Hopf lemma_linear case} only addresses the linear case. Given the sub- and supersolutions developed in Section \ref{section-construction} and the recently established boundary regularity results (\cite{dhanya2024interiorboundaryregularitymixed}, \cite{bal2025regularityresultsclassmixed}), if the Hopf-type Strong Comparison Principle (Hopf-type SCP) for mixed local-nonlocal operators is available for singular problems, one can prove the three-solution theorem for the above equation \eqref{conclusion_equation}. The proof of the existence of a third solution will follow the arguments similar to those of Theorem \ref{theorem_3}.

In a forthcoming work, we develop a Hopf-type SCP that applies to nonlinear and nonhomogeneous mixed operators. Beyond the immediate application to a three-solution theorem, such a result carries intrinsic mathematical interest and has broader implications in the study of multiplicity results involving mixed operators.


\section*{Acknowledgements}

The author expresses gratitude to Dr. Dhanya R. for suggesting this problem and for the valuable discussions throughout the work. The author also sincerely thanks Prof. J. Giacomoni for his engaging feedback. The author was supported by the Prime Minister's Research Fellowship.


\bibliographystyle{amsplain}
\bibliography{reference}	

\end{document}